\newcommand{\changefont}{\fontsize{8}{8}\selectfont}
\renewcommand{\fnum@algocf}{\AlCapSty{\AlCapFnt\nobreakspace\algorithmcfname}}%
\newtheorem{theorem}{Theorem}[section]
\newtheorem{proposition}[theorem]{Proposition}
\newtheorem*{question*}{Question}
\theoremstyle{definition}
\newtheorem{remark}[theorem]{Remark}
\newtheorem{example}[theorem]{Example}
\newtheorem{assumption}[theorem]{Assumption}
\newcommand{\set}[1]{\left\{#1\right\}}
\newcommand{\mc}{\mathcal}
\newcommand{\mbb}{\mathbb}
\newcommand{\mbf}{\mathbf}
\newcommand{\mds}{\mathds}
\newcommand{\mf}{\mathfrak}
\newcommand{\bs}{\boldsymbol}
\newcommand{\norm}[1]{\left\lVert#1\right\rVert}
\newcommand{\abs}[1]{\lvert#1\rvert}
\newcommand{\wt}{\widetilde}
\newcommand{\wh}{\widehat}
\newcommand{\ul}[1]{\underline{#1}}
\newcommand{\ol}[1]{\overline{#1}}
\newcommand{\ab}[1]{{\color{black}#1}}
\newcommand{\nn}[1]{\textcolor{black}{#1}}
\newcommand{\nnrev}[1]{\textcolor{black}{#1}}
\DeclareMathOperator{\transpose}{T}
\DeclareMathOperator{\complex}{i}
\DeclareMathOperator*{\argmin}{arg\,min}
\DeclareMathOperator{\st}{s.t.}
\newcommand*{\mathcolor}{}
\def\mathcolor#1#{\mathcoloraux{#1}}
\newcommand*{\mathcoloraux}[3]{%
  \protect\leavevmode
  \begingroup
    \color#1{#2}#3%
  \endgroup
}
\newcommand{\sts}{\mathcolor{white}{\st}}
\newenvironment{varsubequations}[1]
 {%
  \addtocounter{equation}{-1}%
  \begin{subequations}
  \def\@currentlabel{#1}%
 }
 {%
  \end{subequations}\ignorespacesafterend
 }
\begin{document}

%
\title{Communication-Censored-ADMM for Electric Vehicle Charging in Unbalanced Distribution Grids}

\author{
\IEEEauthorblockN{A. Bhardwaj*$^{\dagger}$}
\and
\IEEEauthorblockN{W. de Carvalho*}
\and
\IEEEauthorblockN{N. Nimalsiri*}
\and
\IEEEauthorblockN{E.~L.~Ratnam*}
\and
\IEEEauthorblockN{N. Rin*}
\thanks{\noindent *School of Engineering, The Australian National University, Canberra, Australia. \newline \indent $^{\dagger}$ Corresponding author Abhishek.Bhardwaj@anu.edu.au}
}


%





\maketitle
\thispagestyle{fancy}
\pagestyle{fancy}


\begin{abstract}

We propose an alternating direction method of multipliers (ADMM)-based algorithm for coordinating the charge and discharge of electric vehicles (EVs) to manage grid voltages while minimizing EV time-of-use energy costs. We prove that by including a Communication-Censored strategy, the algorithm maintains its solution integrity, while reducing peer-to-peer communications. By means of a case study on a representative unbalanced two node circuit, we demonstrate that our proposed Communication-Censored-ADMM \nnrev{(CC-ADMM)} EV charging strategy reduces peer-to-peer communications by up to $80\%$, compared to a benchmark ADMM approach.
\end{abstract}

\begin{IEEEkeywords}
communication-censoring ADMM, electric vehicles, unbalanced distribution grids, voltage regulation, distributed optimization
\end{IEEEkeywords}


%
\IEEEpeerreviewmaketitle

\section{Introduction}

The transportation sector is rapidly evolving to support the grid-integration of electric vehicles (EVs), unlocking financial and environmental benefits for EV customers \cite{macharis2007combining, shareef2016review}. EV charging however, potentially increases the duration and magnitude of grid congestion peaks when not managed carefully. Furthermore, during periods of grid congestion, EV charging  potentially fluctuates supply voltages beyond safe operating limits \cite{venegas2021active}. Consequently, a key challenge with EV proliferation is coordinating EV charging (and discharging) to ensure steady-state supply voltages across an electrical distribution grid remain within prescribed operating limits \cite{nimalsiri2021coordinated}.


Numerous optimization strategies have been proposed to coordinate EV (dis)charging to 
manage voltages throughout the distribution grid \ab{(\cite{DeHoog2014Optimal}, \cite{huo2020decentralized}, \cite{Liu2017Decentralized}), with recent works tackling the issue of model uncertainties (see \cite{wan2018model}, \cite{li2019constrained} for reinforcement learning approaches, and \cite{yuan2021towards} for perspectives on online optimization)}. 
Optimization-based EV coordination approaches can be split into two main classes - \emph{centralized} \nn{and} \emph{distributed} \footnote{Some authors in the literature (such as Khaki et al. in \cite{khaki2019hierarchical}) use the term distributed \nnrev{even} when including a coordinating agent. In this paper, distributed means fully decentralized --- i.e., the EVs only communicate amongst themselves and not with a coordinating agent.}. Comparatively, distributed approaches consider ways to parallelize and scale mathematical computations, allowing for larger populations of EVs to be coordinated across an electrical network \cite{molzahn2017survey}. Over the last decade, 
the \emph{Alternating Direction Method of Multipliers} (ADMM) \cite{boyd2011distributed} technique has provided a simple yet effective approach to distributed EV (dis)charging. 

The ADMM-based \nn{EV} coordination algorithms presented in \cite{zhou2020voltage} \nn{and} \cite{rahman2021continuous} are designed to avoid system voltages breaches. The authors in \cite{khaki2018hierarchical} propose to minimize grid load variations using EV (dis)charging with a hierarchical ADMM \nnrev{approach}. More recently\nnrev{,} Nimalsiri et al. \ab{propose in} \cite{nimalsiri2021distributed} an ADMM-based approach to minimize time-of-use EV charge operational costs while regulating unbalanced supply voltages to within operational limits. Yet none of the aforementioned approaches consider ways to reduce the communication cost associated with ADMM-based distributed algorithms. 

Specifically, the distributed algorithms in \cite{rahman2021continuous, nimalsiri2021distributed} 
assume a communication network where each EV must communicate at every time step, with all other EVs in its vicinity. For large scale networks, this communication overhead dwarfs the associated distributed computation costs. Consequently, a tradeoff exists for EV scalability, in the context of ease of computation and growing communication costs, which has been studied in different contexts, e.g., \cite{tsianos2012communication, nedic2018network, berahas2018balancing}. As such, we are motivated to reduce ADMM-based communications in addition to minimizing time-of-use EV charge costs, all while regulating unbalanced grid voltages to avoid quasi steady-state operation breaches. 



In this paper, we propose a 
distributed Communication-Censored-ADMM EV (dis)charging algorithm, which we refer to as \emph{\ref{alg1}}, to reduce the overall communication costs of coordinating EV (dis)charging, while regulating unbalanced grid voltages to avoid quasi steady-state operation breaches. Furthermore, we seek to minimize time-of-use EV charge costs, all while preserving the ADMM-based algorithm accuracy. Spec\nn{i}fically, we extend the ADMM-based algorithm presented in \cite{nimalsiri2021distributed} for coordinated EV (dis)charging in a number of ways. First, we incorporate EV-based inverter reactive power control for improved voltage regulation. Then, we integrate the communication censored-ADMM approach of \cite{liu2019communication} with our model to create CC-ADMM. Most importantly, we obtain theoretical convergence guarantees that the CC-ADMM converges to an optimal solution of the EV (dis)charge problem. By means of a case study, we compare communication costs and solution accuracy \nn{of CC-ADMM} against the benchmark Dis-Net-EVCD algorithm from 
\cite{nimalsiri2021distributed}, for two examples. 




The remainder of this paper is organized as follows. 
In Section \ref{S:setup} we introduce a residential EV system which is connected to an unbalanced distribution grid. In Section \ref{S:optimodel} we present the EV (dis)charging problem as a centralized optimization problem. Section \ref{S:comms} reformulates the centralized EV (dis)charging problem into the ADMM framework, and shows that the CC-ADMM algorithm converges to an optimal solution. Section \ref{S:numerics} includes numerical simulation results to demonstrate the potential of the communication censor\nn{ed} approach. 

\section*{Notation}
We use the symbols $\mbb{R}^{n}, \mbb{C}^{n}$ to denote the $n$-dimensional vector space of real and complex numbers, respectively. Throughout, a vector $\mbf{x}\in\mbb{C}^{n}$ represents a column vector. We denote by $\mbf{x}^{\transpose}\in\mbb{C}^{n}$ the \emph{transpose} of $\mbf{x}$, which is a row vector. For a vector $\mbf{x}\in\mbb{C}^{n}$ we write $\mbf{x}_{i}$ for the $i^{\text{th}}$ component of $\mbf{x}$. We write $\mbb{R}^{n}_{\geq0}$ for the non-negative orthant in  (the set of vectors which have only non-negative components). We write $\mds{1}_{n}\in\mbb{R}^{n}$ for the vector which has all components equal to 1 (we omit the subscript $n$ when the dimension is clear from the context). The inner product of vectors $\mbf{x}, \mbf{y}\in\mbb{R}^{n}$ is denoted by $\mbf{x}^{\transpose}\mbf{y}$ or $\left\langle \mbf{x}, \mbf{y} \right\rangle$. Given two vectors $\mbf{x}, \mbf{y}\in\mbb{R}^{n}$ the inequality $\mbf{x}\leq \mbf{y}$ is to be understood componentwise (i.e., $\mbf{x}_{i}\leq \mbf{y}_{i}$ for $i=1,\dotsc,n$). We denote by $\norm{\mbf{x}}$ the Euclidian norm of $\mbf{x}\in\mbb{R}^{n}$, which is $\sqrt{\mbf{x}^{\transpose}\mbf{x}}$.

The set of real and complex matrices are denoted by $\mbb{R}^{n\times n}$ and $\mbb{C}^{n\times n}$, respectively. Given a matrix $W\in\mbb{C}^{n\times n}$, we write $[W]_{i,j}$ for the entry in row $i$ and column $j$ of the matrix $W$. We denote the identity matrix in $\mbb{C}^{n\times n}$ by $I_{n}$ (we omit the subscript when the size is clear from the context). The zero vector in $\mbb{C}^{n}$ is denoted by $\mbf{0}_{n}$, and the zero matrix in $\mbb{C}^{n\times n}$ is denoted by $\mbf{0}_{n\times n}$, where we omit the subscripts when the distinction and sizes can be easily deduced by the context. Given matrices $A_{1}, \dotsc, A_{m}\in\mbb{C}^{n\times n}$, we denote the \emph{direct sum} as
$$
\bigoplus_{i=1}^{m}A_{i} = 
\begin{bmatrix}
    A_{1} & \mbf{0}_{n\times n} & \dotsc & \dotsc & \mbf{0}_{n\times n}\\
    \mbf{0}_{n\times n} & A_{2} & \mbf{0}_{n\times n} & \dotsc & \mbf{0}_{n\times n}\\
    \vdots & \ddots & \ddots & \ddots & \vdots\\
    \vdots & \ddots & \ddots & \ddots & \vdots\\
    \mbf{0}_{n\times n} & \dotsc & \dotsc & \dotsc & A_{m}
\end{bmatrix}.
$$

Given two sets $A, B$, we use the following standard notations
\begin{align*}
    A\times B & = \set{(a,b) \;\middle\vert\; a\in A, \ b\in B},  \\
    A\cup B & = \set{ c \;\middle\vert\; c\in A \text{ or } c\in B },\\
    A\cap B & = \set{ c \;\middle\vert\; c\in A \text{ and } c\in B },
\end{align*}
and we denote by $\abs{A}$ the cardinality of the set $A$. Further notation will be introduced as necessary.

\section{Problem Formulation}\label{S:setup}

\subsection{Residential EV Energy System}

Fig.~\ref{fig:convention} illustrates the topology of a grid-connected Residential EV Energy System for a single customer, consisting of \ab{a meter,} %
an EV and residential load situated behind the Point of Common Coupling (PCC). In what follows, we consider a set of $N$ customers $\mc{N}:=\set{1,\dotsc,n,\dotsc,N}$, where each customer $n$ charges or discharges $EV_{n}$. We assume that for each customer, $EV_{n}$ is enabled with both real and reactive power control by way of the EV battery inverter.

Let $\Delta$ be a time interval length \ab{\footnote{\ab{Emerging grid sensors such as micro-phasor measurement units (micro-PMUs) provide voltage and current phasor data at a high temporal resolution (see \cite{von2014micro, zhou2016abnormal}). We anticipate customer metering technology to follow a similar trajectory, reporting high temporal voltage and current data, which would also improve the applicability of our approach.}}}, and $\mc{T}:=\set{1,\dotsc,T}$ be a time horizon partition, so that the planning horizon is $[0,\Delta T]$. 

Let $p_{n}(t)\in\mbb{R}$ denote the \emph{real} power to or from  $EV_{n}$ over the time period \ab{$[\Delta(t-1), \Delta t]$} (in kW), and similarly, let $q_{n}(t)\in\mbb{R}$ denote the \emph{reactive} power to or from $EV_{n}$ over the time period \ab{$[\Delta(t-1), \Delta t]$} (in kVAR). 

\begin{figure}
    \centering
    \includegraphics[scale=0.38]{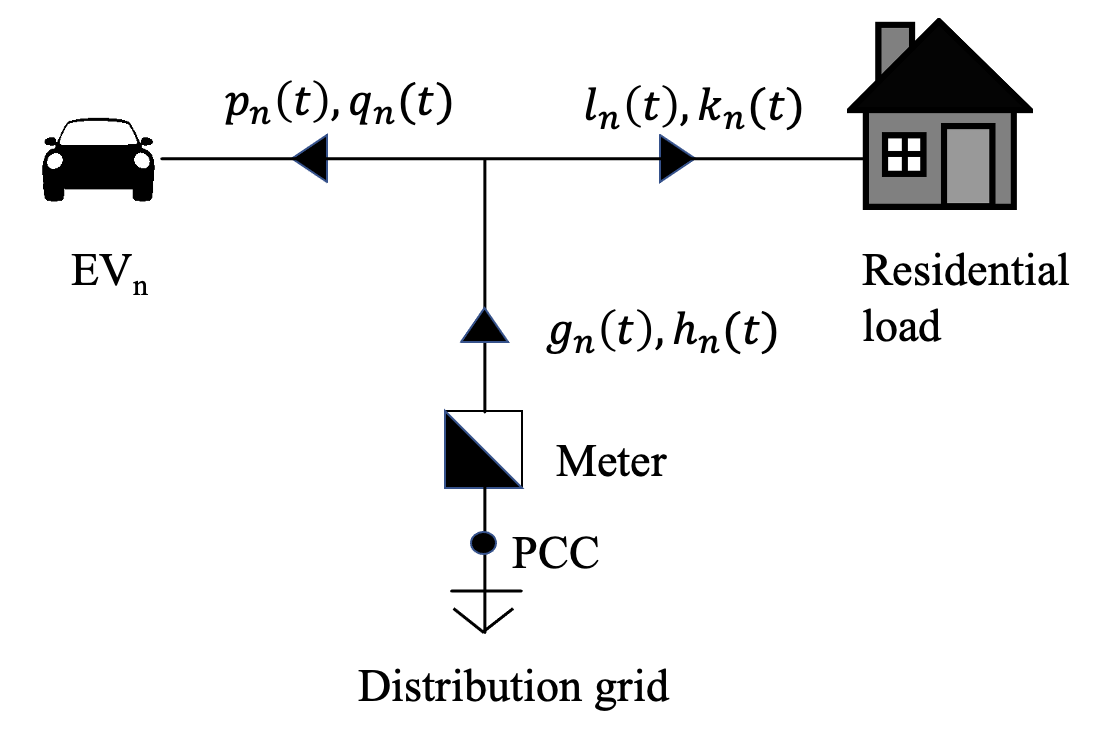}
    \caption{A Residential EV Energy System, where the arrows indicate the positive direction of power flow. For example, non-negative $p_{n}(t)$ is real power to $EV_{n}$, and negative $p_{n}(t)$ is real power from $EV_{n}$. Here, $g_{n}(t),h_{n}(t),l_{n}(t),k_{n}(t)$ are the real and reactive powers through the PCC, and the residential load, respectively.}
    \label{fig:convention}
\end{figure}



The \emph{power profile} of $EV_{n}$ over \ab{$[0,\Delta T]$} is defined as
\begin{equation*}
    \left(\mbf{p}_{n}, \mbf{q}_{n}\right)
    =
    \left(
    \begin{bmatrix}
    p_{n}(1)\\ 
    \vdots  \\
    p_{n}(T) 
    \end{bmatrix},
    \begin{bmatrix}
    q_{n}(1)\\ 
    \vdots  \\
    q_{n}(T) 
    \end{bmatrix}
    \right)
    \in\mbb{R}^{T}\times\mbb{R}^{T}.
\end{equation*}

Each $EV_{n}$ has associated with it the following set of design parameters; 
\emph{arrival time index} $a_{n}\in\mc{T}$, \emph{departure time index} $d_{n}\in\mc{T}$, \emph{battery capacity} $b_{n}$, \emph{inverter capacity} $\ol{s}_{n}$, \emph{initial state of charge (SoC)} $\hat{\sigma}_{n}$, \emph{target} SoC $\sigma^{*}_{n}$, \emph{minimum} SoC $\ul{\sigma}_{n}$, \emph{maximum} SoC $\ol{\sigma}_{n}$, \emph{maximum charge rate} $\ol{p}_{n}$, and \emph{minimum charge rate} $\ul{p}_{n}$. 
These design parameters are collected into the set
\begin{equation*}
    \Lambda_{n} := \set{
    \begin{gathered}
    a_{n}, d_{n}, b_{n}, \ol{s}_{n}, \hat{\sigma}_{n}, \sigma_{n}^{*}, 
    \ol{\sigma}_{n}, \ul{\sigma}_{n}, \ol{p}_{n}, \ul{p}_{n}
    \end{gathered}
    }.
\end{equation*}   
\begin{remark}
\ab{We do not include battery (dis)charge efficiencies in the set of design parameters for convenience}. Adding these parameters will convert our optimization-based problem formulation (which follows) into a more difficult mixed integer program. However, with careful consideration, the core ideas we present in this paper can also be applied in the mixed integer program setting.
\end{remark}
Let the apparent power to or from $EV_n$ be denoted by ${s}_{n}(t)$, defined (for all \ab{$\Delta t \in[0,\Delta T]$)} by
\begin{equation*}
    {s}_{n}(t)^{2} := p_{n}(t)^{2} + q_{n}(t)^{2}.
\end{equation*}
The apparent power through the inverter of $EV_n$ is constrained as $s_{n}(t) \leq \ol{s}_{n}$, where $\ol{s}_{n}$ denotes the inverter capacity. It follows that, for all \ab{$\Delta t \in[0,\Delta T]$},
\begin{equation}\label{eq:q_bound}
    p_{n}(t)^{2} + q_{n}(t)^{2} \leq \ol{s}_{n}^{2}.
\end{equation}

Each customer $n$ must specify a departure time and target SoC which is feasible within the departure time, i.e., it must hold that
\begin{equation*}
    \sigma^{*} \leq \hat{\sigma} + \ab{\Delta(d_{n}-a_{n})\ol{p}_{n}}. 
\end{equation*}
The battery SoC of $EV_{n}$ at time \ab{$\Delta t\in[0\Delta T]$} is denoted by $\sigma_{n}(t)$, which can be defined as
\begin{equation}\label{eq:SoC_instant}
    \sigma_{n}(t) := \hat{\sigma} + \ab{\Delta\sum_{j=1}^{t}p_{n}(j)},
\end{equation}
and the \emph{SoC profile} of $EV_{n}$ is then defined as
\begin{equation*}
    \bs{\sigma}_{n} := 
    \begin{bmatrix}
    \sigma_{n}(1)\\
    \vdots \\
    \sigma_{n}(t)\\
    \vdots \\
    \sigma_{n}(T)
    \end{bmatrix}
    \in\mbb{R}^{T}_{\geq0}.
\end{equation*}

\subsection{EV Battery Constraints and EV Operation Costs}

The battery of $EV_{n}$ is constrained to prevent over-(dis)charging, in the following way
\begin{equation}\label{eq:SoC_constraint}
    \ul{\sigma}_{n}\leq \sigma_{n}(t) \leq \ol{\sigma}_{n},
\end{equation}
which implies that the SoC profile must satisfy
\begin{equation*}
    \ul{\sigma}_{n}\mds{1} \leq \bs{\sigma}_{n} \leq \ol{\sigma}_{n}\mds{1}.
\end{equation*}

Let \ab{$\ul{\alpha}_{n} = (\ul{\sigma}_{n} - \hat{\sigma}_{n})/\Delta$, and $\ol{\alpha}_{n} = (\ol{\sigma}_{n} - \hat{\sigma}_{n})/\Delta$}. Then using \eqref{eq:SoC_instant} together with \eqref{eq:SoC_constraint} gives
\begin{equation}
    \ab{\ul{\alpha}_{n}}\mds{1}\leq M \mbf{p}_{n} \leq \ab{\ol{\alpha}_{n}}\mds{1},
\end{equation}
where 
\begin{equation*}
    M = \begin{bmatrix}
        1 & 0 & \dotso & \dotso & 0\\
        1 & 1 & 0 & \dotso & 0\\
        \vdots & \vdots & \dotso & \dotso & 0\\
        1 & 1 & \dotso & \dotso & 1
        \end{bmatrix}
        \in\mbb{R}^{T\times T}.
\end{equation*}
Due to the limited (dis)charging power of the battery, we have
\ab{
\begin{equation*}
    \ul{p}_{n}\leq p_{n}(t) \leq \ol{p}_{n},
\end{equation*}
}
which implies that the power profile vector $\mbf{p}_{n}$ is constrained as
\begin{equation}\label{eq:p_bound}
    \ul{p}_{n}\mds{1}\leq \mbf{p}_{n} \leq \ol{p}_{n}\mds{1}.
\end{equation}
To incorporate the charging demand for $EV_{n}$, we let \ab{$e_{n} = (\sigma^{*}_{n} - \hat{\sigma}_{n})/\Delta$} and enforce $\sigma_{n}(T) \geq \sigma_{n}^{*}$. Combining this charging demand with \eqref{eq:SoC_instant} gives
\begin{equation*}
\mds{1}^{\transpose}\mbf{p}_{n} \geq e_{n}.
\end{equation*}
To limit the duration of (dis)charging for $EV_{n}$ to the period $EV_{n}$ is grid connected ($[a_{n},d_{n}]$), we define the availability matrix $L_{n}$ as follows
\begin{equation*}
    \left[L_{n}\right]_{i,j} 
    := 
    \begin{cases}
    1, & \text{ if } i=j \text{ and } a_{n}<i\leq d_{n},\\
    0, & \text{ otherwise. }
    \end{cases}
\end{equation*}
Accordingly, we further constrain $\mbf{p}_{n}$ and $\mbf{q}_{n}$ by 
\begin{equation*}
    \left( I - L_{n} \right)\mbf{p}_{n} = \left( I - L_{n} \right)\mbf{q}_{n} = \mbf{0},
\end{equation*}
where $I\in\mbb{R}^{T\times T}$ is the identity matrix, and $\mbf{0}\in\mbb{R}^{T}$ is the zero vector. Let us define 
$$
    \mbf{F}_{n}:=\begin{bmatrix}
    \phantom{-}I & \mbf{0}_{T\times T}\\
    -I& \mbf{0}_{T\times T}\\
    \phantom{-}M& \mbf{0}_{T\times T}\\
    -M& \mbf{0}_{T\times T}\\
    \phantom{-}\mds{1}^{\transpose}& \mbf{0}_{T\times T}\\
    \left( I - L_{n} \right)& \mbf{0}_{T\times T}\\
    \left( L_{n} - I \right)& \mbf{0}_{T\times T}\\
    \mbf{0}_{T\times T} & \left( I - L_{n} \right)\\
    \mbf{0}_{T\times T} & \left(L_{n} - I\right)\\
    \end{bmatrix},
    \quad 
    \mbf{f}_{n}:=\begin{bmatrix}
    \phantom{-}\ul{p}_{n}\mds{1}\\
    -\ol{p}_{n}\mds{1}\\
    \phantom{-}\ul{\alpha}_{n}\mds{1}\\
    -\ol{\alpha}_{n}\mds{1}\\
    e_{n}\\
    \mbf{0}\\
    \mbf{0}\\
    \mbf{0}\\
    \mbf{0}
    \end{bmatrix}.
$$
We now combine all of the battery constraints and the apparent power constraints, to write the \emph{feasibility set} of power profile vectors
\begin{equation}\label{eq:feasible_set}
    \mc{F}_{n}
    := 
    \set{ (\mbf{p}_{n}, \mbf{q}_{n})
    \;\middle\vert\;
    \begin{gathered}
    \mbf{F}_{n}\begin{bmatrix}
        \mbf{p}_{n}\\
        \mbf{q}_{n}
    \end{bmatrix}\geq\mbf{f}_{n}, \\
    p_{n}(t)^{2}+q_{n}(t)^{2}\leq \ol{s}_{n}^{2} \quad \forall t\in\mc{T}.
    \end{gathered}
    }
\end{equation}
The following proposition can be easily proven. 
\begin{proposition}\label{P:compact}
$\mc{F}_{n}$ is compact and convex.
\end{proposition}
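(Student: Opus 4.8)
The plan is to prove the two properties separately, relying on the observation that $\mc{F}_n$ in \eqref{eq:feasible_set} is an intersection of sets of two elementary types inside the finite-dimensional Euclidean space $\mbb{R}^{T}\times\mbb{R}^{T}\cong\mbb{R}^{2T}$. Since convexity and closedness are both preserved under intersections, it suffices to verify each defining constraint individually; compactness then follows from the Heine--Borel theorem, i.e., from showing that $\mc{F}_n$ is closed and bounded.

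First I would address convexity. The linear block $\mbf{F}_n\begin{bmatrix}\mbf{p}_n\\\mbf{q}_n\end{bmatrix}\ge\mbf{f}_n$ is a finite system of linear inequalities, and each scalar inequality defines a closed half-space; their intersection is a polyhedron and is therefore convex. For each fixed $t\in\mc{T}$, the apparent-power constraint $p_n(t)^2+q_n(t)^2\le\ol{s}_n^2$ is the sublevel set of the map $(\mbf{p}_n,\mbf{q}_n)\mapsto p_n(t)^2+q_n(t)^2$, which is convex (a squared Euclidean norm composed with a coordinate projection); sublevel sets of convex functions are convex. As $\mc{F}_n$ is the intersection of these convex sets, it is convex.

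For compactness, closedness is immediate: every constraint is a non-strict inequality between continuous functions, so each defines a closed set, and a finite intersection of closed sets is closed. Boundedness is the only quantitative step. The disk constraints give $\abs{p_n(t)}\le\ol{s}_n$ and $\abs{q_n(t)}\le\ol{s}_n$ for every $t\in\mc{T}$, so every $(\mbf{p}_n,\mbf{q}_n)\in\mc{F}_n$ obeys $\norm{(\mbf{p}_n,\mbf{q}_n)}\le\sqrt{2T}\,\ol{s}_n$; hence $\mc{F}_n$ is bounded. Being closed and bounded in $\mbb{R}^{2T}$, it is compact.

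As the text indicates, no step is genuinely difficult. The one point worth flagging is the source of boundedness: the linear block alone bounds $\mbf{p}_n$ through \eqref{eq:p_bound} but leaves the active components of $\mbf{q}_n$ (those indices $t$ with $a_n<t\le d_n$) unconstrained, so boundedness of the reactive-power coordinates relies essentially on the apparent-power constraints \eqref{eq:q_bound}. Once this is noted, the argument is complete.
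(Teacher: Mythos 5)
Your proof is correct and is exactly the routine argument the paper has in mind when it states the proposition "can be easily proven" without supplying a proof: convexity from intersecting half-spaces with sublevel sets of convex quadratics, closedness from non-strict inequalities on continuous functions, and boundedness from the apparent-power disks via Heine--Borel. Your remark that the reactive-power coordinates are bounded only through \eqref{eq:q_bound} (the linear block merely zeroes the inactive components of $\mbf{q}_{n}$) is accurate and worth keeping.
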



We assume that each customer is compensated for delivering power to the grid at the same rate they are billed for consuming power from the grid. For our time horizon $\mc{T}$, we let the \emph{price profile} be denoted by
\begin{equation*}
    \bs{\eta}
    =
    \begin{bmatrix}
    \eta(1)\\
    \vdots\\
    \eta(t)\\
    \vdots\\
    \eta(T)
    \end{bmatrix}
    \in\mbb{R}^{T}_{\geq0},
\end{equation*}
where $\eta(t)$ is the price of electricity (in \textdollar/kWh) over the time period \ab{$[\Delta(t-1), \Delta t]$}. The operational cost for $EV_{n}$ is then defined by
\begin{equation}\label{eq:cost_func}
    \Omega_{n}(\mbf{p}_{n}) :=\ab{\Delta\bs{\eta}^{\transpose}\mbf{p}_{n} +  \kappa_{n}\mbf{p}_{n}^{\transpose}\mbf{p}_{n}},
\end{equation}
where $\kappa_{n}$ is a (small) regularization parameter which reduces to occurrence of unnecessary EV battery charge-discharge cycles. Specifically, we include the second term in \eqref{eq:cost_func} to maintain a smoother battery profile and avoiding unnecessary charge-discharge cycles \cite{rivera2013alternating}, and in this way, we include the cost of battery degradation within our operational costs for $EV_{n}$.
Positive values of $\Omega_{n}(\mbf{p}_{n})$ represent financial cost\nn{s} whereas negative values of $\Omega_{n}(\mbf{p}_{n})$ \nn{represent} financial gains.



\subsection{Unbalanced Distribution Grid Model}

We consider a three phase, unbalanced, radial distribution feeder that includes two phase and single phase laterals, represented by a graph $\mc{G} = \set{\mc{V}_{0}, \mc{E}}$. The set of vertices $\mc{V}_{0}=\set{0,\dotsc,i,\dotsc,k}$ represent nodes along the feeder and the edges $\mc{E} \subseteq \mc{V}_{0}\times \mc{V}_{0}$ represent line segments along the feeder with cardinality $\abs{\mc{E}}=k$. 

Node $0$ is considered the root node (feeder head) which is taken as an infinite bus, decoupling the interaction between the feeder and the wider power grid. An edge $(ij)\in\mc{E}$ exists if there is \nn{a} line segment between \nn{node} $i$ and \nn{node} $j$, with node $i$ being closest to node $0$.

We assume that the graph $\mc{G}$ is simple with no repeated edges or self loops for any $i\in\mc{V}_{0}$. Let $\mc{V} = \mc{V}_{0}\backslash \set{0}$, and for each vertex $i\in\mc{V}$ let $\mc{E}^{i}\subseteq \mc{E}$ be the set of edges on the unique path from node $0$ to node $i$.

Each edge $(ij)\in\mc{E}$ is either a single phase, two phase, or three phase edge, and each vertex $i\in\mc{V}_{0}$ is either a single phase, two phase, or three phase node, where phases are labelled $\mf{a}, \mf{b}$ and $\mf{c}$. The set of all phases at node $i\in\mc{V}_{0}$ are denoted $\Phi^{i}$, e.g., for a three phase node we would have $\Phi^{i} = \set{\mf{a}, \mf{b}, \mf{c}}$. We assume node $0$ is a three phase node.

The set of phases along edge $(ij)\in\mc{E}$ is denoted as $\Phi^{ij}$, e.g., $\Phi^{ij} = \set{\mf{a}, \mf{b}, \mf{c}}$ for a three phase line segment. For any edge $(ij)\in\mc{E}$ we have 
\begin{equation*}
    \Phi^{ij} \subseteq \left(\Phi^{i}\cap\Phi^{j}\right).
\end{equation*}
Next, we denote phases as $\phi$ and $\psi$, where $\phi, \psi\in\set{\mf{a}, \mf{b}, \mf{c}}$. 

Let $\set{(i:\phi)}_{\phi\in\Phi^{i}}$ represent the tuple of phases at node $i\in\mc{V}$, e.g., if $i$ is a three phase node, then we have $\set{(i:\phi)}_{\phi\in\Phi^{i}} = \set{ (i:\mf{a}),(i:\mf{b}),(i:\mf{c}) }$. Now let
\begin{equation*}
    \mc{K} = \bigcup_{i\in\mc{V}} \set{(i:\phi)}_{\phi\in\Phi^{i}}
\end{equation*}
with $\abs{\mc{K}} = K$ giving the sum of the total number of phases across all nodes in $\mc{V}$. Each  element $(i:\phi)\in\mc{K}$ represents a \emph{supply point} which serves $N^{i:\phi}\geq0$ customers, such that 
\begin{equation}
    \sum_{i\in\mc{V}}\sum_{\phi\in\Phi^{i}} N^{i:\phi} = N.
\end{equation}

We label the customers by initializing the customer index at $n=1$ and increment the index through all customers at a supply point, before continuing the index at each following supply point, and across all supply points in $\mc{K}$ in ascending order, respectively.

To identify the location of each customer with reference to a supply point $(i:\phi)\in\mc{K}$, let $\Theta$ be a binary matrix with rows indexed by elements of $\mc{K}$, and columns indexed by elements of $\mc{N}$, and define the entries of $\Theta$ as
\begin{equation}\label{eq:connection_matrix}
    \left[\Theta\right]_{(i:\phi),n} \negmedspace
    =\negmedspace
    \begin{cases}
    1, & \text{if customer $n\in\mc{N}$ is}\\
       & \text{connected to supply}\\
       & \text{point $(i:\phi)\in\mc{K}$}\\
    0, & \text{otherwise.}
    \end{cases}
\end{equation}

Let $r^{ij,\phi\psi}$ and $x^{ij,\phi\psi}$ denote the resistance and reactance of line segment $(ij)\in\mc{E}$ respectively, where $\phi$ and $\psi$ are the phases along the line segment. Then $z^{ij,\phi\psi}:=r^{ij,\phi\psi}+\mf{i}x^{ij,\phi\psi}$ (where $\mf{i}^{2}=-1)$, so that each line segment has a complex impedance $\mbf{z}^{ij}$, e.g., for a three phase line segment
\begin{equation*}
    \mbf{z}^{ij} = 
    \begin{bmatrix}
    z^{ij,\mf{aa}} & z^{ij,\mf{ab}} & z^{ij,\mf{ac}}\\
    z^{ij,\mf{ba}} & z^{ij,\mf{bb}} & z^{ij,\mf{bc}}\\
    z^{ij,\mf{ca}} & z^{ij,\mf{cb}} & z^{ij,\mf{cc}}
    \end{bmatrix}
    \in\mbb{C}^{3\times3}.
\end{equation*}

Let $Z^{ij,\phi\psi}:=\sum_{(uv)\in \left(\mc{E}^{i}\cap\mc{E}^{j}\right)}z^{uv,\phi\psi}\in\mbb{C}$ denote the sum of the impedances along the unique path which is common to the paths from node $0$ to node $i$, and node $0$ to node $j$, such that both paths incorporate phases $\phi$ and $\psi$.

Let $\omega = e^{\frac{-2\pi\complex}{3}}$ and let $[[\phi]]$ denote phase $\phi$ as a numeral, where we set $[[\mf{a}]]=0, [[\mf{b}]]=1, [[\mf{c}]]=2$. Let $R\in\mbb{R}_{\geq0}^{K\times K}$ be a square matrix with rows and columns indexed by elements of $\mc{K}$, with the entries defined as
\begin{equation*}
    [R]_{(i:\phi),(j:\psi)} = 2\Re\set{(Z^{ij,\phi\psi})^{*}\cdot \omega^{[[\phi]]-[[\psi]]}}\in\mbb{R}
\end{equation*}
where $i,j\in\mc{V}, \phi\in\Phi^{i}$, and $\psi\in\Phi^{j}$. We similarly define the matrix $X$ as
\begin{equation*}
    [X]_{(i:\phi),(j:\psi)} = -2\Im\set{(Z^{ij,\phi\psi})^{*}\cdot \omega^{[[\phi]]-[[\psi]]}}\in\mbb{R}
\end{equation*}
where $i,j\in\mc{V}, \phi\in\Phi^{i}$, and $\psi\in\Phi^{j}$. $\Re$ and $\Im$ denote the real and imaginary parts respectively, and $^{*}$ is the complex conjugate.

Let $v^{i:\phi}(t)$ denote the squared voltage magnitude of phase $\phi\in\Phi^{i}$ at node $i\in\mc{V}$, at time $t$. For a node $i\in\mc{V}$ and time $t\in\mc{T}$, let 
\begin{equation*}
    \mbf{v}^{i}(t) := 
    \begin{bmatrix}
    \set{v^{i:\phi}(t)}_{\phi\in\Phi^{i}}
    \end{bmatrix}^{\transpose}
    \in\mbb{R}^{\abs{\Phi^{i}}}
\end{equation*}
for example, at a three phase node we have
\begin{equation*}
    \mbf{v}^{i}(t) = 
    \begin{bmatrix}
    v^{i:\mf{a}}(t)\\
    v^{i:\mf{b}}(t)\\
    v^{i:\mf{c}}(t)
    \end{bmatrix}.
\end{equation*}
Collecting across all nodes in $\mc{V}$ we define
\begin{equation*}
    \mbf{V}(t) = 
    \begin{bmatrix}
    \mbf{v}^{1}(t)\\
    \vdots\\
    \mbf{v}^{i}(t)\\
    \vdots\\
    \mbf{v}^{k}(t)
    \end{bmatrix}\in\mbb{R}^{K}
\end{equation*}
and let $v^{0}$ be the squared voltage magnitude of each phase at the root node, which we set to the nominal voltage. We also define $\mbf{V}^{0} = v^{0}\mds{1}\in\mbb{R}^{K}$.

Let ${p}^{i:\phi}(t)\in\mbb{R}$ denote the net real power (in \ab{kW}) to or from node $i\in\mc{V}$ on phase $\phi\in\Phi^{i}$ over the time period \ab{$[\Delta(t-1),\Delta t]$}. In particular, ${p}^{i:\phi}(t)$ is the net real power to or from $N^{i:\phi}$ customers connected to phase $\phi\in\Phi^{i}$ at node $i\in\mc{V}$. Given a node $i\in\mc{V}$ we let
\begin{equation*}
    \mf{p}^{i}(t) :=
    \begin{bmatrix}
    \set{ 
    p^{i:\phi}(t)
    }_{\phi\in\Phi^{i}}
    \end{bmatrix}^{\transpose}\in\mbb{R}^{\abs{\Phi^{i}}}
\end{equation*}
and collecting over all nodes in $\mc{V}$ we define
\begin{equation*}
    \mbf{P}(t):=
    \begin{bmatrix}
    \mf{p}^{1}(t)\\
    \vdots\\
    \mf{p}^{i}(t)\\
    \vdots\\
    \mf{p}^{k}(t)
    \end{bmatrix}
    \in\mbb{R}^{K}.
\end{equation*}
Similarly, let $q^{i:\phi}\ab{(t)}\in\mbb{R}$ denote the net reactive power to or from node $i\in\mc{V}$ on phase $\phi\in\Phi^{i}$ over the time period \ab{$[\Delta(t-1),\Delta t]$}. And analogously define
\begin{equation*}
    \mf{q}^{i}(t) := 
    \begin{bmatrix}
    \set{
    q^{i:\phi}(t)
    }_{\phi\in\Phi^{i}}
    \end{bmatrix}\in\mbb{R}^{\abs{\Phi^{i}}}
\end{equation*}
and
\begin{equation*}
    \mbf{Q}(t) = 
    \begin{bmatrix}
    \mf{q}^{1}(t)\\
    \vdots\\
    \mf{q}^{i}(t)\\
    \vdots\\
    \mf{q}^{k}(t)
    \end{bmatrix}\in\mbb{R}^{K}.
\end{equation*}

Since each supply point $(i:\phi)\in\mc{K}$ serves $N^{i:\phi}$ customers, $p^{i:\phi}(t)$ and $q^{i:\phi}(t)$ have contributions from the EV and non-EV load. Accordingly, let us denote by $\tilde{p}^{i:\phi}(t)$ and $\tilde{q}^{i:\phi}(t)$, respectively, the net real and reactive power to or from the non-EV loads at node $i\in\mc{V}$ on phase $\phi\in\Phi^{i}$ over the time period \ab{$[\Delta(t-1),\Delta t]$}. Likewise, we denote by $\hat{p}^{i:\phi}(t)$ and $\hat{q}^{i:\phi}(t)$, respectively, the real and reactive power to or from the EV loads at supply point $(i:\phi)\in\mc{K}$ over the time period \ab{$[\Delta(t-1),\Delta t]$}. 
Ignoring losses, we define
\begin{align*}
    p^{i:\phi}(t) & := \tilde{p}^{i:\phi}(t)+\hat{p}^{i:\phi}(t),\\
    q^{i:\phi}(t) & := \tilde{q}^{i:\phi}(t)+\hat{q}^{i:\phi}(t).
\end{align*}
Similarly, we decompose terms $\mf{p}^{i}(t)$, $\mf{q}^{i}(t)$, $\mbf{P}(t)$ and $\mbf{Q}(t)$, such that for all $t\in\mc{T}$, we define
\begin{align*}
    \mbf{P}(t) & := \wt{\mbf{P}}(t) + \wh{\mbf{P}}(t),\\
    \mbf{Q}(t) & := \wt{\mbf{Q}}(t) + \wh{\mbf{Q}}(t).
\end{align*}

\ab{

}

\subsubsection{Power Flow Equations}
Consider now the power flow equations from \cite{gan2014convex, arnold2016optimal}, which extend the LinDistFlow equations to the unbalanced setting by
\begin{equation*}
    \mbf{V}(t) = \mbf{V}^{0} - R\mbf{P}(t) - X\mbf{Q}(t).
\end{equation*}
Let 
\begin{equation*}
    \wt{\mbf{V}}(t):= \mbf{V}^{0} -R\wt{\mbf{P}}(t) - X\wt{\mbf{Q}}(t),
\end{equation*}
which defines the baseline voltage from the aggregate non-EV loads over the time period \ab{$[\Delta(t-1),\Delta t]$}. Then, our primary equation of interest becomes
\begin{equation}\label{eq:LinDistFlow}
    \mbf{V}(t) = \wt{\mbf{V}}(t) - R\wh{\mbf{P}}(t) - X\wh{\mbf{Q}}(t).
\end{equation}

We now reformulate \eqref{eq:LinDistFlow} in terms of the power transfer from each customer in $\mc{N}$ (instead of each supply point in $\mc{K})$. We define
\begin{equation*}
    \mc{P}(t):=
    \begin{bmatrix}
    p_{1}(t)\\
    \vdots\\
    p_{n}(t)\\
    \vdots\\
    p_{N}(t)
    \end{bmatrix}\in\mbb{R}^{N},
    \,
    \mc{Q}(t):=
    \begin{bmatrix}
    q_{1}(t)\\
    \vdots\\
    q_{n}(t)\\
    \vdots\\
    q_{N}(t)
    \end{bmatrix}\in\mbb{R}^{N}
\end{equation*}
as vectors of real and reactive power to or from all EVs in $\mc{N}$ over the time period \ab{$[\Delta(t-1),\Delta t]$} (where clearly, the $n^{\text{th}}$ element is the power to or from $EV_{n}$). Then, together with the matrix $\Theta$ \eqref{eq:connection_matrix}, we write
\begin{equation*}
    \wh{\mbf{P}}(t) = \Theta\mc{P}(t), \quad \wh{\mbf{Q}}(t) = \Theta\mc{Q}(t),
\end{equation*}
for any $t\in\mc{T}$. Now let $D:=-R\Theta\in\mbb{R}^{K\times N}$ and $E:=-X\Theta\in\mbb{R}^{K\times N}$, and write
\begin{align*}
    D & = 
    \begin{bmatrix}
    D_{1} & \dotso & D_{n} & \dotso & D_{N}
    \end{bmatrix},\\
    E & = 
    \begin{bmatrix}
    E_{1} & \dotso & E_{n} & \dotso & E_{N}
    \end{bmatrix},
\end{align*}
where $D_{n},E_{n}\in\mbb{R}^{K}$ for each $n\in\mc{N}$. We now rewrite \eqref{eq:LinDistFlow} as 
\begin{equation*}
    \mbf{V}(t) = \wt{\mbf{V}}(t) - D\mc{P}(t) - E\mc{Q}(t).
\end{equation*}
Let us now concatenate over the time horizon \ab{$[0,\Delta T]$}, and write
\begin{equation*}
    \mbf{V} = 
    \begin{bmatrix}
    \mbf{V}(1)\\
    \vdots\\
    \mbf{V}(t)\\
    \vdots\\
    \mbf{V}(T)
    \end{bmatrix}\in\mbb{R}^{KT}
\end{equation*}
and similarly,
\begin{equation*}
    \wt{\mbf{V}} = 
    \begin{bmatrix}
    \wt{\mbf{V}}(1)\\
    \vdots\\
    \wt{\mbf{V}}(t)\\
    \vdots\\
    \wt{\mbf{V}}(T)
    \end{bmatrix}\in\mbb{R}^{KT}.
\end{equation*}
Letting
\begin{align*}
    \ol{D}_{n} & = \bigoplus_{1}^{T} D_{n}\in\mbb{R}^{KT\times T},\\ 
    \ol{E}_{n} & = \bigoplus_{1}^{T} E_{n}\in\mbb{R}^{KT\times T},
\end{align*}
we obtain
\begin{equation}\label{eq:power_flow}
    \mbf{V} = \wt{\mbf{V}} + \sum_{n=1}^{N}\left( \ol{D}_{n}\mbf{p}_{n} + \ol{E}_{n}\mbf{q}_{n}\right).
\end{equation}


\section{Centralized EV (dis)charging}\label{S:optimodel}

Here we formulate a centralized optimization problem to \nn{obtain} the optimal (dis)charge rates for each $EV_{n}$. First, at each node $i\in\mc{V}$ we constrain the voltage magnitude on each phase $\phi\in\Phi^{i}$ to stay within safe operational limits $[\ul{v}^{i:\phi},\ol{v}^{i:\phi}]$, where $\ul{v}^{i:\phi},\ol{v}^{i:\phi}\in\mbb{R}$ are the respective lower and upper bounds for operational limits.

Let us define the following vectors,
\begin{align*}
    \ul{\mbf{v}} & :=
    \begin{bmatrix}
        \begin{bmatrix}
            \set{
            \ul{v}^{1:\phi}
            }_{\phi\in\Phi^{1}}
        \end{bmatrix}^{\transpose}\\
    \vdots\\
        \begin{bmatrix}
            \set{
            \ul{v}^{i:\phi}
            }_{\phi\in\Phi^{i}}
        \end{bmatrix}^{\transpose}\\
    \vdots\\
        \begin{bmatrix}
            \set{
            \ul{v}^{k:\phi}
            }_{\phi\in\Phi^{k}}
        \end{bmatrix}^{\transpose}\\
    \end{bmatrix}\in\mbb{R}^{K},\\
    \ol{\mbf{v}}&:=
    \begin{bmatrix}
        \begin{bmatrix}
            \set{
            \ol{v}^{1:\phi}
            }_{\phi\in\Phi^{1}}
        \end{bmatrix}^{\transpose}\\
    \vdots\\
        \begin{bmatrix}
            \set{
            \ol{v}^{i:\phi}
            }_{\phi\in\Phi^{i}}
        \end{bmatrix}^{\transpose}\\
    \vdots\\
        \begin{bmatrix}
            \set{
            \ol{v}^{k:\phi}
            }_{\phi\in\Phi^{k}}
        \end{bmatrix}^{\transpose}\\
    \end{bmatrix}\in\mbb{R}^{K},
\end{align*}
and
\begin{equation*}
    \ul{\mbf{V}}:=
    \begin{bmatrix}
        \ul{\mbf{v}}\\
        \vdots\\
        \ul{\mbf{v}}
    \end{bmatrix}\in\mbb{R}^{KT},
    \quad
    \ol{\mbf{V}}:=
    \begin{bmatrix}
        \ol{\mbf{v}}\\
        \vdots\\
        \ol{\mbf{v}}
    \end{bmatrix}\in\mbb{R}^{KT}.
\end{equation*}
We now constrain the voltage as follows
\begin{equation}\label{eq:cons_volt}
    \ul{\mbf{V}}\leq \mbf{V} = \wt{\mbf{V}} + \sum_{n=1}^{N}\left( \ol{D}_{n}\mbf{p}_{n} + \ol{E}_{n}\mbf{q}_{n}\right) \leq \ol{\mbf{V}}.
\end{equation}
To make the presentation clearer, let us define
\begin{align*}
\Gamma_{n} & :=
\begin{bmatrix}
    \ol{D}_{n}\\
    -\ol{D}_{n}
\end{bmatrix}\in\mbb{R}^{2KT\times T},\\
\Xi_{n} & :=
\begin{bmatrix}
    \ol{E}_{n}\\
    -\ol{E}_{n}
\end{bmatrix}\in\mbb{R}^{2KT\times T},\\
\mbf{w}& :=
\begin{bmatrix}
    \ol{\mbf{V}} - \wt{\mbf{V}}\\
    -\ul{\mbf{V}} + \wt{\mbf{V}}
\end{bmatrix}\in\mbb{R}^{2KT}
\end{align*}
so we may now write \eqref{eq:cons_volt} as
\begin{equation*}
    \sum_{n=1}^{N}\Gamma_{n}\mbf{p}_{n}+\Xi_{n}\mbf{q}_{n}\leq \mbf{w}.
\end{equation*}

For each customer the operational cost over the time horizon \ab{$[0,\Delta T]$} is given by $\Omega_{n}(\mbf{p}_{n})$ \eqref{eq:cost_func}, and we wish to minimize the total operational cost \nn{of all customers in $\mc{N}$} ($\textstyle\sum_{n=1}^{N}\Omega_{n}(\mbf{p}_{n})$), subject to the feasibility sets $\mc{F}_{n}$ \eqref{eq:feasible_set}. We write the optimization problem \nn{\eqref{eq:EVmodel}} as
\begin{varsubequations}{$\mc{P}1$}
\label{eq:EVmodel}
\begin{align}
	\min_{\mbf{p}_{1},\dotsc,\mbf{p}_{n}\in\mbb{R}^{T}} & \ \sum_{n=1}^{N}\Omega_{n}(\mbf{p}_{n}) \label{eq:p1.1}\\
    \st & \ \left(\mbf{p}_{n},\mbf{q}_{n}\right)\in\mc{F}_{n} \label{eq:p1.2}\\
	\sts & \ \sum_{n=1}^{N} \Gamma_{n}\mbf{p}_{n} + \Xi_{n}\mbf{q}_{n} \leq \mbf{w}.\label{eq:p1.3}
\end{align}
\end{varsubequations}

\ab{
\begin{remark}
It is possible to include the power flow constraints (which are structurally similar to the inverter capacity constraint \eqref{eq:q_bound}) 
into \eqref{eq:EVmodel}. However, as our primary focus in this work is communication costs, and so for the sake of brevity we do not present this here.
\end{remark}
}

\section{Distributed Communication Censored Approach}\label{S:comms}

In this section we reformulate problem \eqref{eq:EVmodel} into an ADMM formulation. Then, we show that for this problem we can use the Communication-Censored-ADMM algorithm of \cite{liu2019communication}, with guarantees of convergence.

Let us begin with a quick summary of the general ADMM paradigm.

\subsection{ADMM Summary}\label{SS:admm}

Given an optimization problem
\begin{align}\label{eqadmm}
  \begin{split}
 \min_{\mathbf{x},\mathbf{z}} & \ f(\mathbf{x}) +g(\mathbf{z})  \\
 \st & \ \mathbf{A}\mathbf{x}+\mathbf{B}\mathbf{z}=\mathbf{c},\\
 &\ \mathbf{x}\in\mathbb{R}^n, \mathbf{z}\in \mathbb{R}^m.
    \end{split}
\end{align}
ADMM proceeds by first defining the augmented Lagrangian
\begin{align*}
L_{\alpha}(\mathbf{x},\mathbf{z},\mathbf{y}) & = f(\mathbf{x})+g(\mathbf{z})+\mathbf{y}^{T}(\mathbf{A}\mathbf{x}+\mathbf{B}\mathbf{z}-\mathbf{c}) \\
& \mathrel{\phantom{=}}+ \frac{\alpha}{2}\norm{\mathbf{A}\mathbf{x}+\mathbf{B}\mathbf{z}-\mathbf{c}}^{2}
\end{align*}
with dual variable $\mathbf{y}$, and regularization parameter $\alpha$ ($\alpha$ can also be thought of as a stepsize, which balances convergence with constraints satisfaction). The decision variables $\mbf{x,y,z}$ are then updated sequentially on each iteration $k$ as,
$$\mathbf{x}^{(k+1)} = \argmin_{\mathbf{x} \in\mathbb{R}^n} \  L_{\alpha}\left(\mathbf{x},\mathbf{z}^{(k)},\mathbf{y}^{(k)}\right), $$
$$\mathbf{z}^{(k+1)} = \argmin_{\mathbf{z} \in\mathbb{R}^m} \  L_{\alpha}\left(\mathbf{x}^{(k+1)},\mathbf{z},\mathbf{y}^{(k)}\right), $$
$$\mathbf{y}^{(k+1)}= \mathbf{y}^{(k)} - \alpha \left( \mathbf{A}\mathbf{x}^{(k+1)}+\mathbf{B}\mathbf{z}^{(k+1)}-\mathbf{c}\right).$$

The ADMM algorithm has many desirable properties, such as \emph{strong duality}, \emph{linear convergence rates}, etc. Importantly, the ADMM algorithm can be implemented in a distributed way, and can be applied to a variety of problems, as there are very few restrictions on the function $g$ in \eqref{eqadmm}. For a more detailed explanation, the reader should consult \cite{boyd2011distributed}.

\subsection{ADMM Formulation for \ref{eq:EVmodel}}

We consider a communication network represented by a graph $\mbb{G}:=\set{\mc{N},\mbb{E}}$, where the vertex set $\mc{N}$ is the set of \nn{EV} customers (as defined previously), and the edges \nn{in} $\mbb{E}\subseteq\mc{N}\times\mc{N}$ represent communication links between EV customers (an example is presented in Fig.~\ref{fig:my_label}). Specifically, an edge $(uv)\in\mbb{E}$ allows bidirectional communication between customer $u$ and customer $v$. Due to symmetry in the communication network, if edge $(uv)\in\mbb{E}$, then we must also have $(vu)\in\mbb{E}$. We denote the set of neighbors of customer $u$ as $\mc{N}_{u}:=\set{v\in\mc{N} \;\middle\vert\; (uv)\in\mbb{E}}$. We make the following assumption about $\mbb{G}$.

\begin{assumption}\label{ass1}
The graph $\mbb{G}$ is connected. 
\end{assumption}

\begin{figure}
    \centering
    \includegraphics[scale=0.45]{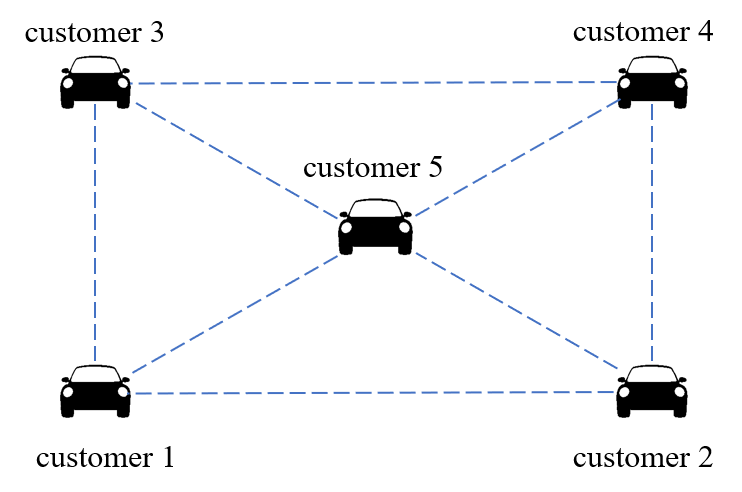}
    \caption{An example of a communication network represented by a graph $\mbb{G}:=\set{\mc{N},\mbb{E}}$}
    \label{fig:my_label}
\end{figure}

Let us introduce a slack variable $\mbf{s}_{n}\in\mbb{R}^{2KT}$ for each customer $n\in\mc{N}$, and define the following
\begin{align*}
    \Psi_{n} & :=
    \begin{bmatrix}
        \Gamma_{n} & \Xi_{n} & I
    \end{bmatrix}\in\mbb{R}^{2KT\times (2T+2KT)},\\
    \mbf{u}_{n} & :=
    \begin{bmatrix}
        \mbf{p}_{n}\\
        \mbf{q}_{n}\\
        \mbf{s}_{n}
    \end{bmatrix}\in\mbb{R}^{2T+2KT}.
\end{align*}
Then, we write the inequality constraint \eqref{eq:p1.3} as 
\begin{equation*}
    \sum_{n=1}^{N} \Psi_{n}\mbf{u}_{n} = \sum_{n=1}^{N}\Gamma_{n}\mbf{p}_{n} + \Xi_{n}\mbf{q}_{n} +\mbf{s}_{n} = \mbf{w}.
\end{equation*}
Define the sets 
\begin{align*}
\mc{S}_{n}&:=\set{\mbf{s}_{n}\in\mbb{R}^{2KT}\;\middle\vert\; \mbf{0}\leq\mbf{s}_{n}},\\
\mc{U}_{n}&:=\set{\mbf{u}_{n}\in\mbb{R}^{2T+2KT} \;\middle\vert\; \left(\mbf{p}_{n},\mbf{q}_{n}\right)\in\mc{F}_{n} \, ,\  \mbf{s}_{n}\in\mc{S}_{n}}.
\end{align*} 
Denote the indicator function by
\begin{equation*}
    \mc{I}_{n}(\mbf{u}_{n}) = 
    \begin{cases}
    0, & \text{ if } \mbf{u}_{n}\in\mc{U}_{n}\\
    \infty, & \text{ otherwise }
    \end{cases}
\end{equation*}
and let 
\begin{equation*}
    f_{n}(\mbf{u}_{n}) : =\Omega_{n}(\mbf{p}_{n}) + \mc{I}_{n}(\mbf{u}_{n}).
\end{equation*}

\begin{proposition}
The optimization problem \eqref{eq:EVmodel} is equivalent to the following
\begin{varsubequations}{$\mc{P}2$}
\label{eq:EVmodel_ADMM}
\begin{align}
    \min_{\mbf{u}_{1},\dotso,\mbf{u}_{N}\in\mbb{R}^{2T+2KT}} & \ \sum_{n=1}^{N}f_{n}(\mbf{u}_{n}) \label{eq:p2.1}\\
    \st & \ \sum_{n=1}^{N}\Psi_{n}\mbf{u}_{n} = \mbf{w}.\label{eq:p2.2}
\end{align}
\end{varsubequations}
\end{proposition}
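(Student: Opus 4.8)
The plan is to prove the equivalence by exhibiting an explicit, objective-preserving correspondence between the feasible points of \eqref{eq:EVmodel} and those of \eqref{eq:EVmodel_ADMM}, and then arguing that this correspondence forces the two problems to share the same optimal value and (after discarding slack coordinates) the same optimizers. The two formulations differ in only two respects: first, the aggregate inequality \eqref{eq:p1.3} is turned into the aggregate equality \eqref{eq:p2.2} by introducing the per-customer slack variables $\mbf{s}_n$; and second, the membership constraint $(\mbf{p}_n,\mbf{q}_n)\in\mc{F}_n$ is absorbed into the objective through the indicator $\mc{I}_n$. Both are handled by direct construction, so no deep machinery is needed.

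First I would treat the direction $\eqref{eq:EVmodel}\Rightarrow\eqref{eq:EVmodel_ADMM}$. Given any feasible $(\mbf{p}_n,\mbf{q}_n)_{n=1}^{N}$ for \eqref{eq:EVmodel}, the residual $\mbf{r}:=\mbf{w}-\sum_{n}(\Gamma_n\mbf{p}_n+\Xi_n\mbf{q}_n)$ is nonnegative componentwise by \eqref{eq:p1.3}. I would then assign the whole residual to a single customer, say $\mbf{s}_1:=\mbf{r}$ and $\mbf{s}_n:=\mbf{0}$ for $n\geq2$; since $\mbf{r}\geq\mbf{0}$ this gives $\mbf{s}_n\in\mc{S}_n$ for every $n$, hence $\mbf{u}_n:=(\mbf{p}_n,\mbf{q}_n,\mbf{s}_n)\in\mc{U}_n$. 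By construction $\sum_n\Psi_n\mbf{u}_n=\sum_n(\Gamma_n\mbf{p}_n+\Xi_n\mbf{q}_n)+\mbf{r}=\mbf{w}$, so the lifted point satisfies \eqref{eq:p2.2}, and since each $\mbf{u}_n\in\mc{U}_n$ we have $\mc{I}_n(\mbf{u}_n)=0$, whence $\sum_n f_n(\mbf{u}_n)=\sum_n\Omega_n(\mbf{p}_n)$ matches the objective of \eqref{eq:EVmodel}.

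Next I would treat the converse. Take any $(\mbf{u}_n)_{n=1}^{N}$ satisfying \eqref{eq:p2.2} with finite objective; finiteness forces $\mbf{u}_n\in\mc{U}_n$ for all $n$, i.e., $(\mbf{p}_n,\mbf{q}_n)\in\mc{F}_n$ and $\mbf{s}_n\geq\mbf{0}$. The equality \eqref{eq:p2.2} then reads $\sum_n(\Gamma_n\mbf{p}_n+\Xi_n\mbf{q}_n)=\mbf{w}-\sum_n\mbf{s}_n\leq\mbf{w}$, because $\sum_n\mbf{s}_n\geq\mbf{0}$; thus the tuple $(\mbf{p}_n,\mbf{q}_n)_{n=1}^{N}$ is feasible for \eqref{eq:EVmodel}, again with identical objective since $\Omega_n$ depends only on $\mbf{p}_n$ and the indicators vanish.

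Having shown that both directions preserve feasibility and objective value, I would conclude that the two problems have the same optimal value, that an optimizer of \eqref{eq:EVmodel} lifts to an optimizer of \eqref{eq:EVmodel_ADMM}, and that discarding the slack coordinates of any optimizer of \eqref{eq:EVmodel_ADMM} yields an optimizer of \eqref{eq:EVmodel}. I expect the only point requiring care—rather than a genuine obstacle—to be the slack construction: $\mbf{s}_n$ is a per-customer variable carrying its own nonnegativity constraint $\mbf{s}_n\in\mc{S}_n$, whereas \eqref{eq:p1.3} constrains only the aggregate, so one must check that a valid per-customer split of $\mbf{r}$ always exists. Concentrating the residual on one customer (equivalently, splitting it as $\mbf{s}_n=\mbf{r}/N$) resolves this, and I would remark in passing that the lifted optimum is non-unique precisely because this split is non-unique, which leaves the equivalence of optimal values and of $(\mbf{p}_n,\mbf{q}_n)$-optimizers intact.
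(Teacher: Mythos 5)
Your argument is correct and is precisely the standard slack-variable-plus-indicator reformulation that the paper relies on; the authors state this proposition without proof, and your two-directional construction (concentrating the nonnegative residual $\mbf{r}=\mbf{w}-\sum_{n}(\Gamma_{n}\mbf{p}_{n}+\Xi_{n}\mbf{q}_{n})$ in one slack, and conversely using finiteness of the indicator to recover $(\mbf{p}_{n},\mbf{q}_{n})\in\mc{F}_{n}$ and $\sum_{n}\mbf{s}_{n}\geq\mbf{0}$) supplies exactly the omitted details. Your closing remark that the lift is non-unique only in the slack split, leaving the optimal value and the $(\mbf{p}_{n},\mbf{q}_{n})$-optimizers unaffected, is also the right thing to flag.
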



Since strong duality holds for Problem \eqref{eq:EVmodel_ADMM}, together with Assumption \ref{ass1}, we can reformulate Problem \eqref{eq:EVmodel_ADMM} as a distributed consensus optimization problem using duality as in \cite{chang2014multi}. Specifically,

\begin{varsubequations}{$\mc{P}3$}
\label{eq:EVmodel_Dual}
\begin{align}
    \min_{ \substack{\bs{\lambda}_{i}\in\mbb{R}^{2KT}\\\beta_{ij}\in\mbb{R}^{2KT}}} & \ \sum_{n=1}^{N}\left(\varphi_{n}(\bs{\lambda}_{n})+\frac{1}{N}\bs{\lambda}_{n}^{\transpose}\mbf{w}\right) \label{eq:p3.1}\\
    \st & \ \bs{\lambda}_{n} = \beta_{nm} \ \forall m\in\mc{N}_{n}, \ n\in\mc{N} \label{eq:p3.2}\\
    \sts & \ \bs{\lambda}_{m} = \beta_{nm} \ \forall m\in\mc{N}_{n}, \ n\in\mc{N}. \label{eq:p3.2}
\end{align}
\end{varsubequations}

Here, $\bs{\lambda}_{n}$ is the $n^{\text{th}}$ customer\nn{'}s local copy of the global dual variable $\bs{\lambda}$, $\beta_{\nn{nm}}$ are auxiliary consensus variables, and for all $n\in\mc{N}$
$$
\varphi_{n}(\bs\lambda) = \max_{\mbf{u}_{n}\in\mbb{R}^{2T+2KT}}\left( -f_{n}(\mbf{u}_{n}) - \bs{\lambda}^{\transpose}\Psi_{n}\mbf{u}_{n} \right).
$$

Notice that Problem \eqref{eq:EVmodel_Dual} is in ADMM form \eqref{eqadmm}. Accordingly, we proceed to solve it using a 
distributed ADMM algorithm, which have been shown to perform well in \cite{makhdoumi2017convergence, shi2014linear}. 

The following assumption is necessary and standard for most optimization paradigms.

\begin{assumption}\label{ass2}
There exists an optimal solution $(\mbf{u}_{1}^{*},\dotsc,\mbf{u}_{N}^{*})$ to the problem \eqref{eq:EVmodel_ADMM}.
\end{assumption}

\begin{remark}
Note that as long as the feasibility sets $\mc{F}_{n}$ \eqref{eq:feasible_set} are non-empty, an optimal solution to \eqref{eq:EVmodel_ADMM} exists.
\end{remark}

\subsection{Communication-Censored-ADMM Formulation for \ref{eq:EVmodel_Dual}}



We start with a brief summary of the Communication-Censored-ADMM algorithm. 

At each iteration $k$, each $EV_{n}$ keeps the following local variables; the 
variable $\bs{\lambda}^{k}_{n}$, the dual variable $\nu^{k}_{n}$, the state variable $\wh{\bs{\lambda}}^{k}_{n}$ which was the last broadcast made to its neighbors, and the state variables of its neighbors $\wh{\bs{\lambda}}^{k}_{m}$ for $m\in\mc{N}_{n}$, which was the last broadcast from its neighbors. 

Let $\xi_{n}^{k} = \wh{\bs{\lambda}}_{n}^{k-1} - \bs{\lambda}_{n}^{k}$, and for constants $\gamma>0$, $\varepsilon\in(0,1)$ define
\begin{equation}
H_{n}(k,\xi_{n}^{k}) := \norm{\xi_{n}^{k}} - \gamma\varepsilon^{k}.
\end{equation}
Customer $n\in\mc{N}$ communicates with its neighbors only if $H_{n}(k,\xi_{n}^{k})\geq0$, and in this way communication is restricted to only significant updates of the local primal variables. The complete algorithm, which is executed by each EV $n\in\mc{N}$ in parallel, is presented below in \ref{alg1}.

\vspace{2pt}
\RestyleAlgo{boxruled}
\begin{algorithm}[h]
  \caption{}
  \label{alg1}
  \SetKwInOut{Input}{input}
  \SetKwInOut{Output}{output}
  \DontPrintSemicolon
  \Input{Initial local variables set to $\bs{\lambda}_{n}^{0} = \nu_{n}^{0} = \wh{\bs{\lambda}}_{n}^{0} = \mbf{0}$ and $\wh{\bs{\lambda}}_{m}^{0}=\mbf{0}$ for all $m\in\mc{N}_{n}$.\newline
  Iteration limit $S$. Stepsize $c$. \newline
  Local cost functions $\varphi_{n}$. \newline
  Censoring function $H_{n}$.}
  \For{iteration $k=1,\dotsc,S$ }{
  Compute local 
  variable $\bs{\lambda}_{n}^{k}$ as
  \begin{align*}
  \bs{\lambda}_{n}^{k} &= \argmin_{\bs{\lambda}_{n}}\Bigg( \varphi_{n}(\bs{\lambda}_{n}) +\frac{1}{N}\bs{\lambda}_{n}^{\transpose}\mbf{w}\\
  &+ \left\langle \bs{\lambda}_{n},\nu_{n}^{k-1} - c\sum_{m\in\mc{N}_{n}}(\wh{\bs{\lambda}}_{n}^{k-1} + \wh{\bs{\lambda}}_{m}^{k-1})  \right\rangle \\
  &+ c \abs{\mc{N}_{n}}\norm{\bs{\lambda}_{n}}^{2}\Bigg)
  \end{align*}\;
  Compute $\xi_{n}^{k} = \wh{\bs{\lambda}}_{n}^{k-1}-\bs{\lambda}_{n}^{k}$\;
  \eIf{$H_{n}(k,\xi_{n}^{k})\geq0$}{
  Transmit $\bs{\lambda}_{n}^{k}$ to neighbors and set $\wh{\bs{\lambda}}_{n}^{k} = \bs{\lambda}_{n}^{k}$
  }{
  Do not transmit and set $\wh{\bs{\lambda}}_{n}^{k} = \wh{\bs{\lambda}}_{n}^{k-1}$
  }
  \eIf{Recieve $\bs{\lambda}_{m}^{k}$ from any neighbor $m$}{
  Set $\wh{\bs{\lambda}}^{k}_{m} = \bs{\lambda}_{m}^{k}$
  }{
  Set $\wh{\bs{\lambda}}_{m}^{k} = \wh{\bs{\lambda}}_{m}^{k-1}$
  }
  Update the local dual variable as
  $$
  \nu_{n}^{k} = \nu_{n}^{k-1} + c\sum_{m\in\mc{N}_{n}}(\wh{\bs{\lambda}}_{n}^{k} - \wh{\bs{\lambda}}_{m}^{k})
  $$
  }
\end{algorithm}

\begin{proposition}
For problem \eqref{eq:EVmodel_Dual}, \ref{alg1} converges to an optimal solution $(\bs{\lambda}_{1}^{*},\dotsc,\bs{\lambda}_{N}^{*})$.
\end{proposition}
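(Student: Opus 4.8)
The plan is to recognize that \eqref{eq:EVmodel_Dual} is an instance of the decentralized consensus optimization problem for which the communication-censored ADMM of \cite{liu2019communication} was designed, and then to verify that the structural hypotheses of its convergence theorem are met by our reformulation so that the theorem can be invoked verbatim. First I would set $g_{n}(\bs{\lambda}_{n}) := \varphi_{n}(\bs{\lambda}_{n}) + \frac{1}{N}\bs{\lambda}_{n}^{\transpose}\mbf{w}$ and observe that \eqref{eq:EVmodel_Dual} minimizes $\sum_{n=1}^{N} g_{n}(\bs{\lambda}_{n})$ subject to the edge consensus constraints $\bs{\lambda}_{n} = \beta_{nm}$ and $\bs{\lambda}_{m} = \beta_{nm}$ for every $m\in\mc{N}_{n}$. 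Under Assumption \ref{ass1} these constraints force $\bs{\lambda}_{1} = \dotsb = \bs{\lambda}_{N}$, so \eqref{eq:EVmodel_Dual} is exactly the edge-based consensus form treated in \cite{chang2014multi, liu2019communication}, and \ref{alg1} is precisely the corresponding censored update.

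The substantive step is to check the regularity assumptions required by the convergence theorem of \cite{liu2019communication}, namely that each local objective $g_{n}$ is proper, closed, and convex. Convexity and closedness of $\varphi_{n}$ follow because $\varphi_{n}(\bs{\lambda}) = \sup_{\mbf{u}_{n}}\left(-f_{n}(\mbf{u}_{n}) - \bs{\lambda}^{\transpose}\Psi_{n}\mbf{u}_{n}\right)$ is a pointwise supremum of functions that are affine in $\bs{\lambda}$; adding the affine term $\frac{1}{N}\bs{\lambda}^{\transpose}\mbf{w}$ preserves both properties. For properness I would examine the effective domain: since $\Psi_{n}\mbf{u}_{n} = \Gamma_{n}\mbf{p}_{n} + \Xi_{n}\mbf{q}_{n} + \mbf{s}_{n}$ and $\mbf{s}_{n}$ ranges over the unbounded cone $\mc{S}_{n} = \set{\mbf{s}_{n}\geq\mbf{0}}$, the supremum is $+\infty$ unless $\bs{\lambda}\geq\mbf{0}$; conversely, for $\bs{\lambda}\geq\mbf{0}$ the $\mbf{s}_{n}$-term is maximized at $\mbf{s}_{n}=\mbf{0}$ while the remaining maximization of $-\Omega_{n}(\mbf{p}_{n}) - \bs{\lambda}^{\transpose}(\Gamma_{n}\mbf{p}_{n}+\Xi_{n}\mbf{q}_{n})$ runs over the compact, convex set $\mc{F}_{n}$ (Proposition \ref{P:compact}) and is therefore finite. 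Hence $\operatorname{dom}\varphi_{n} = \set{\bs{\lambda}\geq\mbf{0}}$ and $\varphi_{n}$ is proper. This is the main obstacle, because the indicator $\mc{I}_{n}$ embedded in $f_{n}$ makes $\varphi_{n}$ an extended-real-valued, generally nonsmooth function, so one must confirm that the template of \cite{liu2019communication} admits such conjugate-type local costs rather than requiring finite-valuedness or differentiability.

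It then remains to match the algorithmic ingredients. Assumption \ref{ass2}, together with the strong duality already invoked to pass from \eqref{eq:EVmodel_ADMM} to \eqref{eq:EVmodel_Dual}, guarantees that \eqref{eq:EVmodel_Dual} possesses a saddle point, supplying the optimal $(\bs{\lambda}_{1}^{*},\dotsc,\bs{\lambda}_{N}^{*})$ to which convergence is claimed. The censoring rule $H_{n}(k,\xi_{n}^{k}) = \norm{\xi_{n}^{k}} - \gamma\varepsilon^{k}$ with $\gamma>0$ and $\varepsilon\in(0,1)$ coincides with the summable-threshold condition of \cite{liu2019communication}: because $\sum_{k\geq1}\gamma\varepsilon^{k}<\infty$, the accumulated communication-censoring error is finite and hence cannot destroy convergence. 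With the local costs verified proper, closed, and convex, the graph $\mbb{G}$ connected, a saddle point present, and the censoring tolerances summable, every hypothesis of the convergence theorem of \cite{liu2019communication} is satisfied, and I would apply it directly to conclude that the sequences $\set{\bs{\lambda}_{n}^{k}}$ produced by \ref{alg1} converge to an optimal solution of \eqref{eq:EVmodel_Dual}.
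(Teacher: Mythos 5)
Your proposal is correct and follows essentially the same route as the paper: both reduce the claim to verifying convexity of the local costs $\varphi_{n}(\bs{\lambda}_{n})+\frac{1}{N}\bs{\lambda}_{n}^{\transpose}\mbf{w}$ and then invoke Theorem~1 of \cite{liu2019communication} together with Assumptions~\ref{ass1} and~\ref{ass2}. The paper dispatches the convexity check with ``it can be easily shown,'' whereas you supply the details it omits --- closedness and properness of $\varphi_{n}$ via the pointwise supremum of affine functions, the identification of $\operatorname{dom}\varphi_{n}$ with the nonnegative orthant using the slack cone $\mc{S}_{n}$ and compactness of $\mc{F}_{n}$ (Proposition~\ref{P:compact}), and summability of the censoring thresholds $\gamma\varepsilon^{k}$ --- all of which are consistent with the paper's argument.
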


\begin{proof}
It can be easily shown that the local cost functions 
$$
\varphi_{n}(\bs{\lambda}_{n})+\frac{1}{N}\bs{\lambda}_{n}^{\transpose}\mbf{w}
$$
are convex. 
Taking convexity of the cost functions together with Assumption \ref{ass1} and Assumption \ref{ass2}, the result now follow\nn{s} Theorem 1 of \cite{liu2019communication}. 
\end{proof}

Note that the \nn{$\bs{\lambda}_{n}$} update step in \ref{alg1} involves solving the following min-max optimization problem,
\begin{align*}
\bs{\lambda}_{n}^{k} = \argmin_{\bs{\lambda}_{n}}\max_{\mbf{u}_{n}}
\Bigg\{
-f_{n}(\mbf{u}_{n}) - \bs{\lambda}_{n}^{\transpose}\Psi_{n}\mbf{u}_{n} + \frac{1}{N}\bs{\lambda}_{n}^{\transpose}\mbf{w} \\
+ \left\langle \bs{\lambda}_{n},\nu_{n}^{k-1} - c\sum_{m\in\mc{N}_{n}}(\wh{\bs{\lambda}}_{n}^{k-1} + \wh{\bs{\lambda}}_{m}^{k-1})  \right\rangle\\
+  c\abs{\mc{N}_{n}}\norm{\bs{\lambda}_{n}}^{2} \Bigg\},
\end{align*}
where $c$ is the stepsize (cf. $\alpha$ in Section \ref{SS:admm}).

Following \cite[Section IV-A]{chang2014multi}, as the objective of this min-max problem is convex in $\bs{\lambda}_{n}$ and concave in $\mbf{u}_{n}$, we can use the Minimax Theorem of \cite[Proposition 2.6.2]{bertsekas2003convex} to switch the order of the $\max$ and $\min$ operators. Let 
$$
\mbf{y} = \bs{\Psi_{n}}\mbf{u}_{n} - \frac{1}{N}\mbf{w} - \nu_{n}^{k-1} + c\sum_{m\in\mc{N}_{n}}(\wh{\bs{\lambda}}_{n}^{k-1} + \wh{\bs{\lambda}}_{m}^{k-1}).
$$
Then the inner minimization problem has a solution
\begin{equation}\label{eq:lambda_sol}
\begin{aligned}
2c\abs{\mc{N}_{n}}\bs{\lambda}_{n}^{k} = & \mbf{y} 
\end{aligned}
\end{equation}
and the outer maximization problem reduces to
\begin{equation}\label{eq:lambda_sol2}
\mbf{u}_{n}^{k} = \argmin_{\mbf{u}} \set{ f_{n}(\mbf{u}) + \frac{1}{4c\abs{\mc{N}}}\norm{\mbf{y}
}^{2} }.
\end{equation}
Updating the variable $\bs{\lambda}_{n}^{k}$, by first updating $\mbf{u}_{n}^{k}$ \nn{as per \eqref{eq:lambda_sol2}} and then using \eqref{eq:lambda_sol} is computationally efficient. That is, \ref{alg1} can be run in parallel by each EV, facilitating efficient paralleled EV charge and discharge operations. 

\ab{
\begin{remark}\label{comms-robust}
In the CC-ADMM paradigm, robustness to a communication failure of an individual agent is inherited from the underlying peer-to-peer communication network. That is, CC-ADMM is designed to limit the communication between agents at particular times based on the relevance of the transmitted information --- it does not remove the \emph{ability} to communicate. 
\end{remark}
}

\section{Numerical Simulations}\label{S:numerics}

To demonstrate \ref{alg1} for EV (dis)charging, we consider a representative unbalanced 
three-phase, two node distribution circuit with time-varying loads across a day \ab{($T=48, \Delta = 0.5$)}. The circuit has unbalanced impedences 
--- for instance, $Z^{01,\mf{a}\mf{a}} = 0.1313 + 0.3856\mf{i}$, $Z^{01,\mf{b}\mf{b}} = 0.1278 + 0.3969\mf{i}$, $Z^{01,\mf{c}\mf{c}} = 0.1293 + 0.3920\mf{i}$ ($\mf{i}^{2}=-1$) 
\footnote{The complete data of impedences and loads, and the simulation settings for all our examples is openly available at \url{https://github.com/Abhishek-B/IREP-2022}.}. In what follows, there are 50 customers connected to each of the three supply points, as per the topology illustrated in Fig. \ref{fig:Example_1}. Note that each customer connects to the distribution circuit supply points via a PCC, as per the residential EV energy system depicted in Fig.~\ref{fig:convention}.
\begin{figure}
    \centering
    \includegraphics[height=6cm]{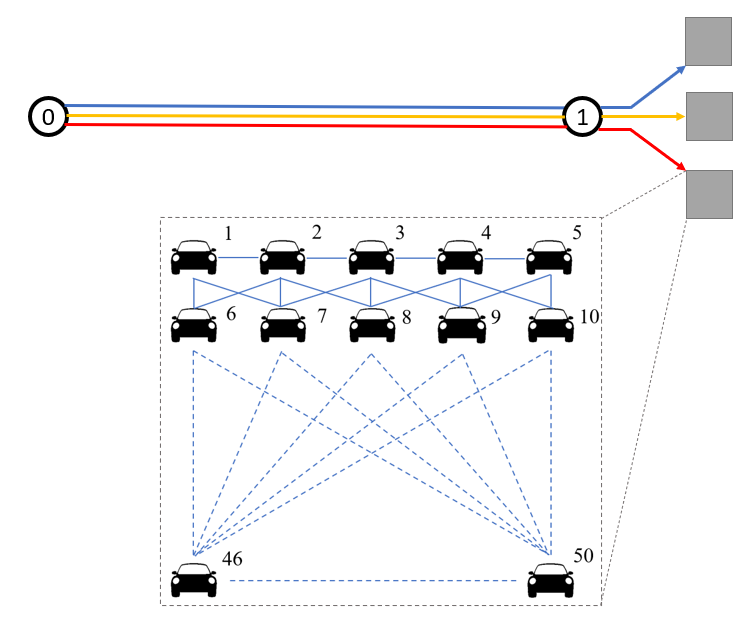}
    \caption{Topology for a representative unbalanced three-phase, two node distribution circuit with $50$ EV customers connected to each single phase supply point (represented by grey boxes at node 1). The PCC for each customer as in Fig.~\ref{fig:convention}, is located at one of these three supply points.
    }
    \label{fig:Example_1}
\end{figure}

We set the following\nn{:} $a_{n}=0,\quad d_{n}=T,\quad \ol{s}_{n}=12,\quad \ol{p}_{n} = 7, \quad \ul{p}_{n} = -7,$ and the operational cost regularization $\kappa_{n}=10^{-4}$, for all EVs. The remaining parameters $b_{n}, \hat{\sigma}_{n}, \sigma^{*}_{n}, \ol{\sigma}_{n}, \ul{\sigma}_{n}$ are chosen randomly in accordance with the simulation setup of \cite{nimalsiri2021coordinated}. We do not list these parameters here, but all of our code and data is publicily available at \url{https://github.com/Abhishek-B/IREP-2022}, so that the reader can access our simulation setup, and test parameters there.

We compare \ref{alg1} against the benchmark ADMM algorithm of \cite{nimalsiri2021distributed} (by turning off the censoring), with step size $c=100$, and iteration limit $S=30$ for both algorithms. 

\begin{example}\label{example1}
We consider a completely connected 
communication network
, i.e., every customer can communicate with every other customer (for $u\in\mc{N}, \mc{N}_{u} = \mc{N}\backslash\{u\}$). Our simulation results are illustrated in Fig.~\ref{fig:result} and Fig.~\ref{fig:censoring}.

\begin{figure*}[]
    \centering
    \includegraphics[width=1\textwidth]{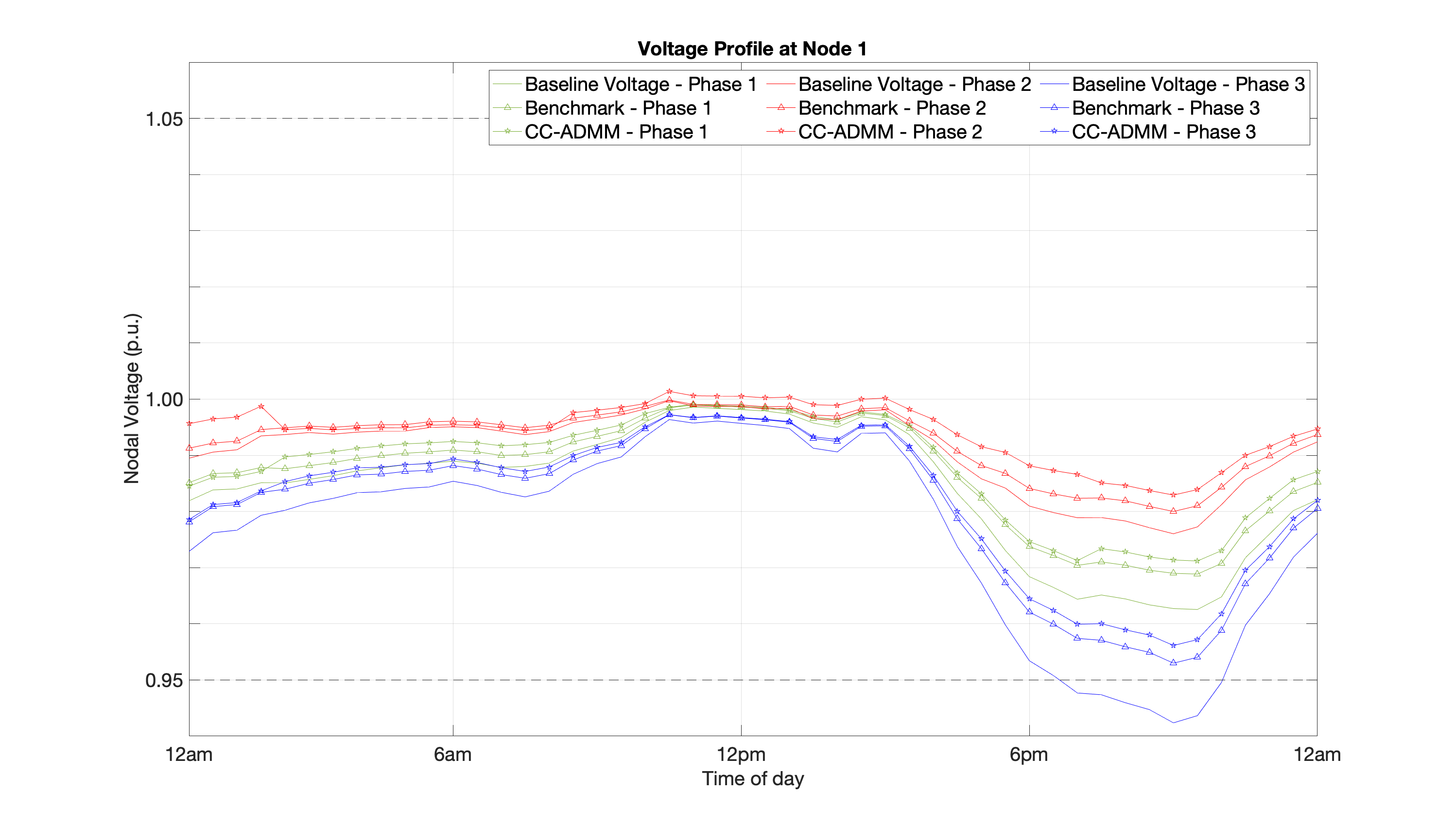}
    \caption{Voltage profile at node~1 for each of the three phases in Example 1. The baseline voltage profile for each phase represents the distribution circuit without grid-connected EV loads --- and we observe a voltage excursion between 6pm and midnight occurring on phase 3. We benchmark the ADMM algorithm from \cite{nimalsiri2021distributed} against \ref{alg1}, and observe a flatter voltage profile in all cases --- absent voltage excursions. Both ADMM algorithms correct the observed voltage excursion on Phase 3 by way of EV discharging.}
    \label{fig:result}
\end{figure*}

Firstly, notice from Fig. \ref{fig:result} that the solution obtained from the benchmark algorithm of \cite{nimalsiri2021distributed} and the censored solution of \ref{alg1} both satisfy safe operational limits of $\pm5\%$ about the nominal voltage. Moreover, the censored solution of \ref{alg1} presents with a generally flatter voltage profile. 

The real success comes from looking at the censoring pattern from \ref{alg1}. In Fig.~\ref{fig:censoring}, a white square means that a customer broadcast information at that iteration, and black square means they did not. In Fig.~\ref{fig:censoring}, we observe that the communication is very sparse. In fact, \ref{alg1} required $21\%$ of the communications that would be otherwise by required by the benchmark algorithm of \cite{nimalsiri2021distributed}. Our simulation results are consistent with findings from \cite{tsianos2012communication}, where it is discovered that communicating less frequently, provides improvements to the optimization methods over networks with communication. 

\begin{figure}[]
    \centering
    \includegraphics[width=8cm]{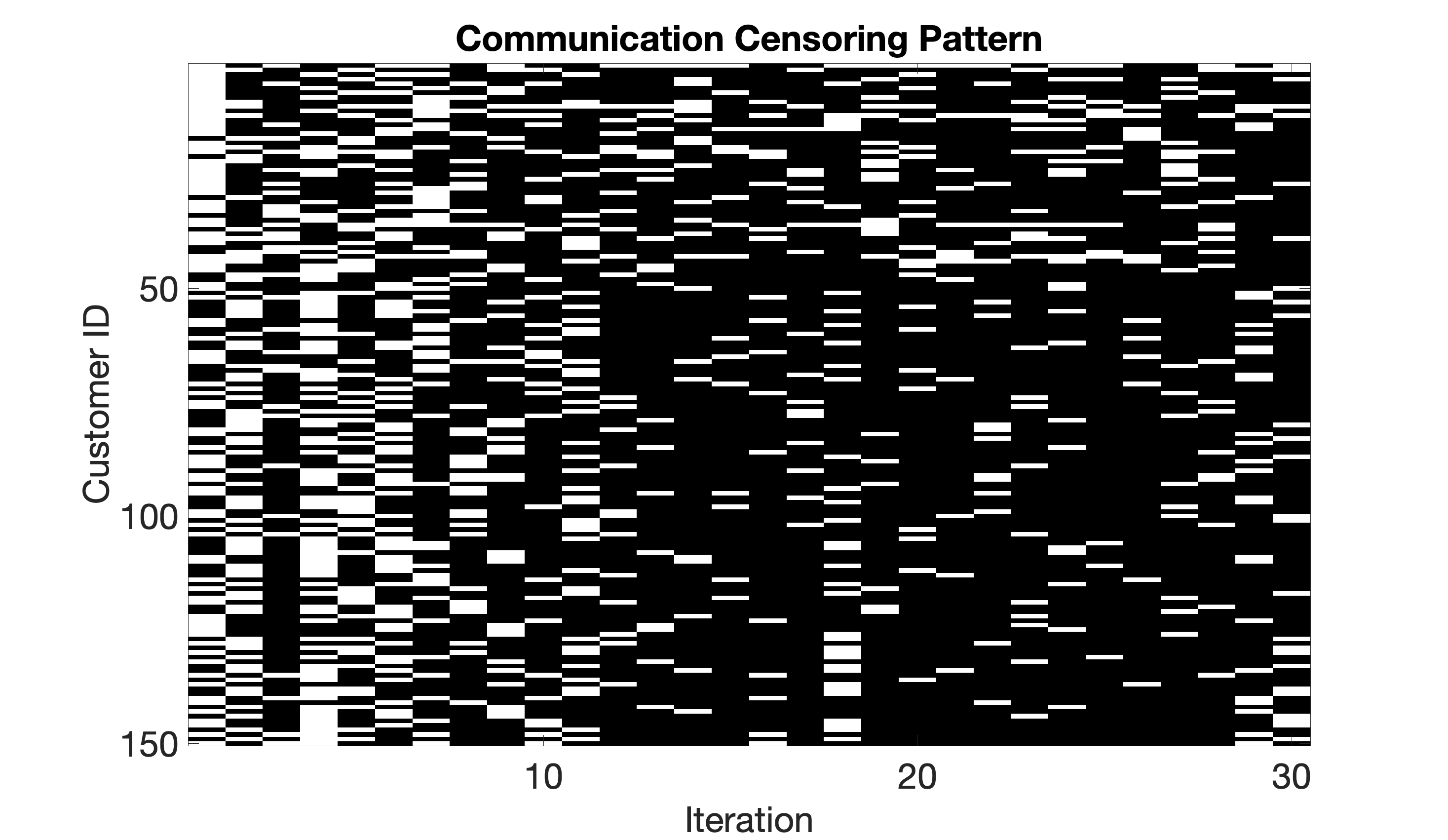}
    \caption{Peer-to-peer customer communications at each iteration as per CC-ADMM for Example \nn{\ref{example1}}. Specifically, a white square indicates that a customer has broadcast information for that iteration, and a black square indicates that the customer did not communicate during the respective iteration.}
    \label{fig:censoring}
\end{figure}

\end{example}

\begin{example}\label{example2}

Next, we consider a communication network where each customer communicates with only 70 other grid-connected customers.

\begin{figure*}[ht!]
    \centering
    \includegraphics[width=1\textwidth]{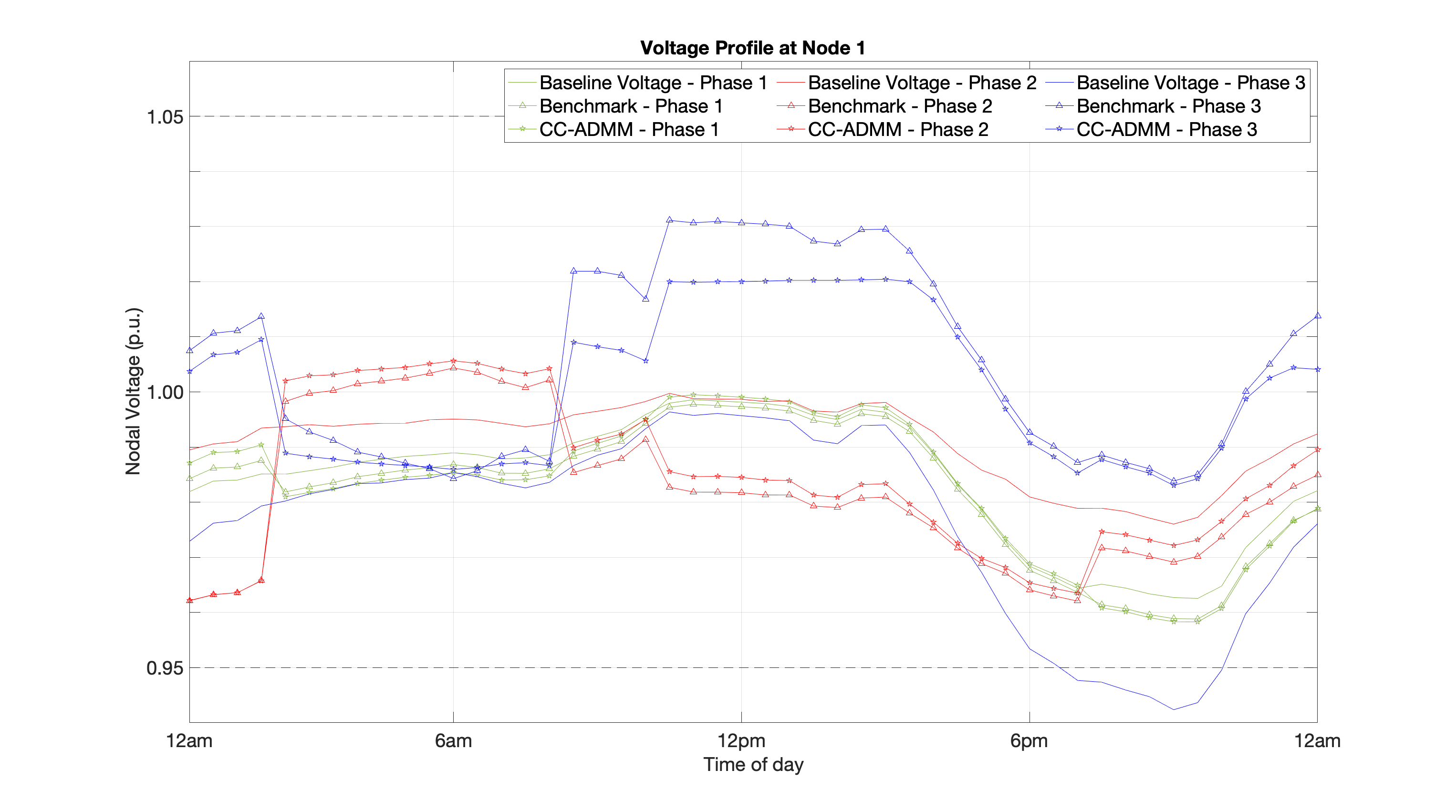}
    \caption{Voltage profile at node~1 for each of the three phases in Example 2. 
    We benchmark the ADMM algorithm from \cite{nimalsiri2021distributed} against \ref{alg1}, and observe a voltage profile which is generally closer to the nominal voltage with \ref{alg1} in all cases --- absent voltage excursions. Both ADMM algorithms correct the observed voltage excursion on Phase 3 by way of EV discharging.}
    \label{fig:result2}
\end{figure*}

Again, we observe from Fig.~\ref{fig:result2} that the solution obtained by the benchmark algorithm of \cite{nimalsiri2021distributed} and \ref{alg1} are qualitatively the same, and both satisfy the safe operational constraints of $\pm5\%$ about the nominal voltage. In fact, the solution \nn{from} \ref{alg1} generally \nn{results in a} voltage profile closer to the nominal voltage ($1$~p.u.). 

Overall, \ref{alg1} required $17\%$ of the communication that was otherwise required by the benchmark algorithm from \cite{nimalsiri2021distributed}. From Fig. \ref{fig:censoring2}, we observe that the  communication censoring pattern is sparser in the earlier iterations in comparison to Example~\ref{example1}. However, we also observe that more customers communicate during later iterations when compared with Example \ref{example1}. Accordingly, we observe that \ref{alg1} reduces the transmission of unnecessary information between EV customers, in the context of a peer-to-peer communication network.

\begin{figure}[h]
    \centering
    \includegraphics[width=8cm]{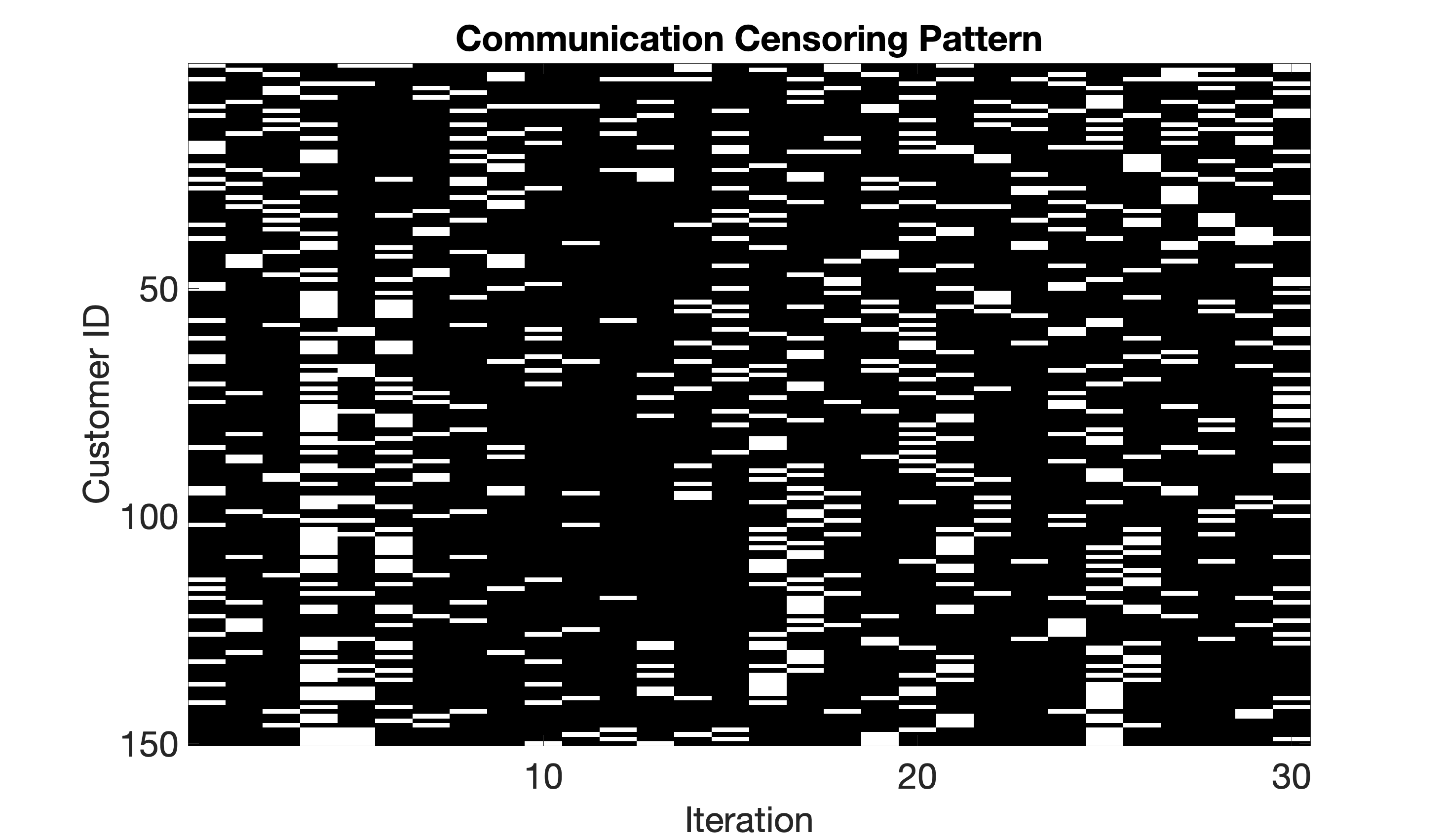}
    \caption{Peer-to-peer customer communications at each iteration as per CC-ADMM for Example 2. Specifically, a white square indicates that a customer has broadcast information for that iteration, and a black square indicates that the customer did not communicate during the respective iteration.}
    \label{fig:censoring2}
\end{figure}

\end{example}

The communication savings observed in these examples are remarkable, and it speaks to the potential of the censoring strategy for EV (dis)charge coordination in larger scale systems with more complicated dynamics. 

\section{Conclusion \& Future Work}

In this paper, we proposed an approach to coordinating EV (dis)charging in unbalanced electrical networks, using a Communication-Censored-ADMM algorithm. We proved that the algorithm is guaranteed to converge to the optimal solution, in the EV (dis)charge setting. The censoring strategy improved the optimization-based EV (dis)charging operations over an electrical network, while also reducing peer-to-peer communications. 
In the presented case study, we demonstrated the potential benefits of censoring communication between EVs. In future work, more realistic unbalanced electrical networks can be considered for benchmarking the proposed Communication-Censored-ADMM EV (dis)charging against other ADMM-based approaches. \ab{Ways to incorporate communication \nnrev{status (success/failure)} checks, such as \emph{beaconing} \cite[Section 2.1]{badonnel2006fault}, in line with Remark \ref{comms-robust}, are also possible. 
}


\section*{Acknowledgment}

The first author would like to thank Dr. P. Braun and Prof. I. Shames for insightful discussions throughout the project.



%

\bibliographystyle{IEEEtran}
\bibliography{ref.bib}

\begin{thebibliography}{10}
\providecommand{\url}[1]{#1}
\csname url@samestyle\endcsname
\providecommand{\newblock}{\relax}
\providecommand{\bibinfo}[2]{#2}
\providecommand{\BIBentrySTDinterwordspacing}{\spaceskip=0pt\relax}
\providecommand{\BIBentryALTinterwordstretchfactor}{4}
\providecommand{\BIBentryALTinterwordspacing}{\spaceskip=\fontdimen2\font plus
\BIBentryALTinterwordstretchfactor\fontdimen3\font minus
  \fontdimen4\font\relax}
\providecommand{\BIBforeignlanguage}[2]{{%
\expandafter\ifx\csname l@#1\endcsname\relax
\typeout{** WARNING: IEEEtran.bst: No hyphenation pattern has been}%
\typeout{** loaded for the language `#1'. Using the pattern for}%
\typeout{** the default language instead.}%
\else
\language=\csname l@#1\endcsname
\fi
#2}}
\providecommand{\BIBdecl}{\relax}
\BIBdecl

\bibitem{macharis2007combining}
C.~Macharis, J.~Van~Mierlo, and P.~Van Den~Bossche, ``Combining intermodal
  transport with electric vehicles: Towards more sustainable solutions,''
  \emph{Transportation Planning and Technology}, vol.~30, no. 2-3, pp.
  311--323, 2007.

\bibitem{shareef2016review}
H.~Shareef, M.~M. Islam, and A.~Mohamed, ``A review of the stage-of-the-art
  charging technologies, placement methodologies, and impacts of electric
  vehicles,'' \emph{Renewable and Sustainable Energy Reviews}, vol.~64, pp.
  403--420, 2016.

\bibitem{venegas2021active}
F.~G. Venegas, M.~Petit, and Y.~Perez, ``Active integration of electric
  vehicles into distribution grids: Barriers and frameworks for flexibility
  services,'' \emph{Renewable and Sustainable Energy Reviews}, vol. 145, p.
  111060, 2021.

\bibitem{nimalsiri2021coordinated}
N.~I. Nimalsiri, E.~L. Ratnam, C.~P. Mediwaththe, D.~B. Smith, and S.~K.
  Halgamuge, ``Coordinated charging and discharging control of electric
  vehicles to manage supply voltages in distribution networks: Assessing the
  customer benefit,'' \emph{Applied Energy}, vol. 291, p. 116857, 2021.

\bibitem{DeHoog2014Optimal}
J.~De~Hoog, T.~Alpcan, M.~Brazil, D.~Thomas, and I.~Mareels, ``Optima charging
  of electric vehicles taking distribution network constraints into account,''
  \emph{IEEE Trans. Power Syst.}, vol.~30, no.~1, pp. 365--375, 2014.

\bibitem{huo2020decentralized}
X.~Huo and M.~Liu, ``Decentralized electric vehicle charging control via a
  novel shrunken primal-multi-dual subgradient (spmds) algorithm,'' in
  \emph{2020 59th IEEE Conference on Decision and Control (CDC)}.\hskip 1em
  plus 0.5em minus 0.4em\relax IEEE, 2020, pp. 1367--1373.

\bibitem{Liu2017Decentralized}
M.~Liu, P.~Phanivong, Y.~Shi, and D.~Callaway, ``Decentralized charging control
  of evs in residential distribution networks,'' \emph{IEEE Trans. Control
  Syst. Technol.}, vol.~27, no.~1, pp. 266--281, 2017.

\bibitem{wan2018model}
Z.~Wan, H.~Li, H.~He, and D.~Prokhorov, ``Model-free real-time ev charging
  scheduling based on deep reinforcement learning,'' \emph{IEEE Transactions on
  Smart Grid}, vol.~10, no.~5, pp. 5246--5257, 2018.

\bibitem{li2019constrained}
H.~Li, Z.~Wan, and H.~He, ``Constrained ev charging scheduling based on safe
  deep reinforcement learning,'' \emph{IEEE Transactions on Smart Grid},
  vol.~11, no.~3, pp. 2427--2439, 2019.

\bibitem{yuan2021towards}
D.~Yuan, A.~Bhardwaj, I.~Petersen, E.~L. Ratnam, and G.~Shi, ``Towards online
  optimization for power grids,'' \emph{ACM SIGENERGY Energy Informatics
  Review}, vol.~1, no.~1, pp. 51--58, 2021.

\bibitem{khaki2019hierarchical}
B.~Khaki, Y.-W. Chung, C.~Chu, and R.~Gadh, ``Hierarchical distributed ev
  charging scheduling in distribution grids,'' in \emph{2019 IEEE Power \&
  Energy Society General Meeting (PESGM)}.\hskip 1em plus 0.5em minus
  0.4em\relax IEEE, 2019, pp. 1--5.

\bibitem{molzahn2017survey}
D.~K. Molzahn, F.~D{\"o}rfler, H.~Sandberg, S.~H. Low, S.~Chakrabarti,
  R.~Baldick, and J.~Lavaei, ``A survey of distributed optimization and control
  algorithms for electric power systems,'' \emph{IEEE Transactions on Smart
  Grid}, vol.~8, no.~6, pp. 2941--2962, 2017.

\bibitem{boyd2011distributed}
S.~Boyd, N.~Parikh, and E.~Chu, \emph{Distributed optimization and statistical
  learning via the alternating direction method of multipliers}.\hskip 1em plus
  0.5em minus 0.4em\relax Now Publishers Inc, 2011.

\bibitem{zhou2020voltage}
X.~Zhou, S.~Zou, P.~Wang, and Z.~Ma, ``Voltage regulation in constrained
  distribution networks by coordinating electric vehicle charging based on
  hierarchical admm,'' \emph{IET Generation, Transmission \& Distribution},
  vol.~14, no.~17, pp. 3444--3457, 2020.

\bibitem{rahman2021continuous}
T.~Rahman, Y.~Xu, and Z.~Qu, ``Continuous-domain real-time distributed admm
  algorithm for aggregator scheduling and voltage stability in distribution
  network,'' \emph{IEEE Transactions on Automation Science and Engineering},
  2021.

\bibitem{khaki2018hierarchical}
B.~Khaki, C.~Chu, and R.~Gadh, ``A hierarchical admm based framework for ev
  charging scheduling,'' in \emph{2018 IEEE/PES Transmission and Distribution
  Conference and Exposition (T\&D)}.\hskip 1em plus 0.5em minus 0.4em\relax
  IEEE, 2018, pp. 1--9.

\bibitem{nimalsiri2021distributed}
N.~Nimalsiri, E.~Ratnam, D.~Smith, C.~Mediwaththe, and S.~Halgamuge,
  ``Distributed optimization-based electric vehicle charging and discharging in
  unbalanced distribution grids,'' \emph{TechRxiv}, 2021.

\bibitem{tsianos2012communication}
K.~Tsianos, S.~Lawlor, and M.~Rabbat, ``Communication/computation tradeoffs in
  consensus-based distributed optimization,'' \emph{Advances in neural
  information processing systems}, vol.~25, 2012.

\bibitem{nedic2018network}
A.~Nedi{\'c}, A.~Olshevsky, and M.~G. Rabbat, ``Network topology and
  communication-computation tradeoffs in decentralized optimization,''
  \emph{Proceedings of the IEEE}, vol. 106, no.~5, pp. 953--976, 2018.

\bibitem{berahas2018balancing}
A.~S. Berahas, R.~Bollapragada, N.~S. Keskar, and E.~Wei, ``Balancing
  communication and computation in distributed optimization,'' \emph{IEEE
  Transactions on Automatic Control}, vol.~64, no.~8, pp. 3141--3155, 2018.

\bibitem{liu2019communication}
Y.~Liu, W.~Xu, G.~Wu, Z.~Tian, and Q.~Ling, ``Communication-censored admm for
  decentralized consensus optimization,'' \emph{IEEE Transactions on Signal
  Processing}, vol.~67, no.~10, pp. 2565--2579, 2019.

\bibitem{von2014micro}
A.~Von~Meier, D.~Culler, A.~McEachern, and R.~Arghandeh, ``Micro-synchrophasors
  for distribution systems,'' in \emph{ISGT 2014}.\hskip 1em plus 0.5em minus
  0.4em\relax IEEE, 2014, pp. 1--5.

\bibitem{zhou2016abnormal}
Y.~Zhou, R.~Arghandeh, I.~Konstantakopoulos, S.~Abdullah, A.~von Meier, and
  C.~J. Spanos, ``Abnormal event detection with high resolution micro-pmu
  data,'' in \emph{2016 Power Systems Computation Conference (PSCC)}.\hskip 1em
  plus 0.5em minus 0.4em\relax IEEE, 2016, pp. 1--7.

\bibitem{rivera2013alternating}
J.~Rivera, P.~Wolfrum, S.~Hirche, C.~Goebel, and H.-A. Jacobsen, ``Alternating
  direction method of multipliers for decentralized electric vehicle charging
  control,'' in \emph{52nd IEEE Conference on Decision and Control}.\hskip 1em
  plus 0.5em minus 0.4em\relax IEEE, 2013, pp. 6960--6965.

\bibitem{gan2014convex}
L.~Gan and S.~H. Low, ``Convex relaxations and linear approximation for optimal
  power flow in multiphase radial networks,'' in \emph{2014 Power Systems
  Computation Conference}.\hskip 1em plus 0.5em minus 0.4em\relax IEEE, 2014,
  pp. 1--9.

\bibitem{arnold2016optimal}
D.~B. Arnold, M.~Sankur, R.~Dobbe, K.~Brady, D.~S. Callaway, and A.~Von~Meier,
  ``Optimal dispatch of reactive power for voltage regulation and balancing in
  unbalanced distribution systems,'' in \emph{2016 IEEE Power and Energy
  Society General Meeting}.\hskip 1em plus 0.5em minus 0.4em\relax IEEE, 2016,
  pp. 1--5.

\bibitem{chang2014multi}
T.-H. Chang, M.~Hong, and X.~Wang, ``Multi-agent distributed optimization via
  inexact consensus admm,'' \emph{IEEE Transactions on Signal Processing},
  vol.~63, no.~2, pp. 482--497, 2014.

\bibitem{makhdoumi2017convergence}
A.~Makhdoumi and A.~Ozdaglar, ``Convergence rate of distributed admm over
  networks,'' \emph{IEEE Transactions on Automatic Control}, vol.~62, no.~10,
  pp. 5082--5095, 2017.

\bibitem{shi2014linear}
W.~Shi, Q.~Ling, K.~Yuan, G.~Wu, and W.~Yin, ``On the linear convergence of the
  admm in decentralized consensus optimization,'' \emph{IEEE Transactions on
  Signal Processing}, vol.~62, no.~7, pp. 1750--1761, 2014.

\bibitem{bertsekas2003convex}
D.~Bertsekas, A.~Nedic, and A.~Ozdaglar, \emph{Convex analysis and
  optimization}.\hskip 1em plus 0.5em minus 0.4em\relax Athena Scientific,
  2003, vol.~1.

\bibitem{badonnel2006fault}
R.~Badonnel, O.~Festor \emph{et~al.}, ``Fault monitoring in ad-hoc networks
  based on information theory,'' in \emph{International Conference on Research
  in Networking}.\hskip 1em plus 0.5em minus 0.4em\relax Springer, 2006, pp.
  427--438.

\end{thebibliography}




\end{document}